\DeclareSymbolFont{cyrletters}{OT2}{wncyr}{m}{n}
\DeclareMathSymbol{\Sha}{\mathalpha}{cyrletters}{"58}
\newcommand{\ba}{\begin{align*}}
\newcommand{\ea}{\end{align*}}
\newcommand{\C}{\ensuremath{{\mathbb{C}}}}
\newcommand{\Z}{\ensuremath{{\mathbb{Z}}}\xspace}
\renewcommand{\P}{\ensuremath{{\mathbb{P}}}}
\newcommand{\R}{\ensuremath{{\mathbb{R}}}}
\newcommand{\E}{\ensuremath{{\mathbb{E}}}}
\newcommand{\ra}{\rightarrow}
\newcommand\Hom{\operatorname{Hom}}
\newcommand\Aut{\operatorname{Aut}}
\newcommand\Sur{\operatorname{Sur}}
\newcommand\tensor{\otimes}
\newcommand\isom{\simeq}
\newcommand\tesnor{\otimes}
\newcommand\cok{\operatorname{cok}}
\newcommand\bq{\begin{equation}}
\newcommand\eq{\end{equation}}
\newtheorem{proposition}{Proposition}[section]
\newtheorem{theorem}[proposition]{Theorem}
\newtheorem{corollary}[proposition]{Corollary}
\newtheorem{lemma}[proposition]{Lemma}
\newtheorem{conjecture}[proposition]{Conjecture}
\theoremstyle{remark}
\newtheorem{nts}{Note to self}
\title{Cokernels of adjacency matrices of random $r$-regular graphs}
\author{Hoi H. Nguyen}
\address{Department of Mathematics\\ The Ohio State University \\ 231 W 18th Ave \\ Columbus, OH 43210 USA}
\email{nguyen.1261@math.osu.edu}
\author{Melanie Matchett Wood}
\address{Department of Mathematics\\
University of Wisconsin-Madison \\ 480 Lincoln Drive \\
Madison, WI 53705 USA}  
\email{mmwood@math.wisc.edu}
\begin{document}
\begin{abstract}
We study the distribution of the cokernels of adjacency matrices (the Smith groups) of certain models of random $r$-regular graphs and directed graphs, using recent mixing results of M\'esz\'aros.  We explain how convergence of such distributions to a limiting probability distribution implies asymptotic nonsingularity of the matrices, giving another perspective on recent results of Huang and M\'esz\'aros on asymptotic nonsingularity of adjacency matrices of random regular directed and undirected graphs, respectively.
We also remark on the new distributions on finite abelian groups that arise, in particular in the $p$-group aspect when $p\mid r$.
\end{abstract}

\maketitle

\section{Introduction}

The singularity problem in combinatorial random matrix theory states that if a square matrix $A_n$ of size $n$ is ``sufficiently
random'', then $A_n$ is non-singular asymptotically almost surely as
$n$ tends to infinity, in other words $p_n$, the probability of $A_n$
being singular, tends to 0. This problem has a rich history, for which
we now mention briefly. In the early 60s Koml\'os \cite{Komlos67}
showed that if the entries of $A_n$ take values $\{0,1\}$
independently with probability 1/2 then $p_n=O(n^{-1/2})$. This bound
was significantly improved by Kahn, Koml\'os and Szemer\'edi
\cite{KKSz95} to $p_n \le 0.999^n$ for random $\pm 1$ matrices, by Tao
and Vu \cite{TV07} to $p_n \le (\frac{3}{4}+o(1))^n$, by Rudelson and
Vershynin \cite{RV08}, and by Bourgain, Vu and Wood \cite{BVW10}
where it was shown that $p_n= (\frac{1}{\sqrt{2}}+o(1))^n$ for random
$\pm 1$ matrices. The methods of these results also give exponential
bounds for other more general iid ensembles. Since then, there have been
subsequent papers addressing the sparse cases, such as those by Wood
\cite{PWood12}, and by Basak and Rudelson \cite{AR17} where in the
later result the entries of $A_n$ can take values 0 with probability as large as
$1-O(\frac{\log n}{n})$. We refer the reader to \cite{TV10, PWood12, AR17} and
the references therein to various extension and application of the
singularity problem for the iid models. 

This singularity problem views the $A_n$ as matrices over $\R$, but since the entries are integers they could also be viewed as elements of the field $\Z/p\Z$ for any prime $p$.  A matrix is singular mod $p$ exactly when its determinant is $0$ mod $p$, and so heuristically, one expects this to happen about $1/p$ of the time instead of $0\%$ of the time.  From this point of view, it is natural to ask more refined questions, including the limiting distribution of ranks of the matrices mod $p$, or the distribution of the cokernels $\Z^n/A_n(\Z^n)$ of the matrices, i.e. the failure of the matrices to be surjective as maps $A_n: \Z^n\ra \Z^n$.   The second author  determined the limiting distribution for the cokernels of random matrices, including the $A_n$ considered above \cite{Wood2015a}.  For square matrices,  these distributions have non-zero limiting behavior if one considers the Sylow $p$-subgroups of the cokernels for only finitely many primes at once.  However, for non-square matrices, recent work of the authors \cite{Nguyen2018a} finds the distribution of the entire cokernel.  The distributions of cokernels that arise are the Cohen-Lenstra distributions on finite abelian groups \cite{Cohen1984}.

In another direction, there
have been results studying the singularity problem for matrices with various dependency conditions on the entries. For
instance in \cite{Nguyen13} the first author studied random (dense) matrices of given
row sums, or in  \cite{AChW16} Adamczak, Chafai and Wolff studied
random matrices
with exchangeable entries. 
Cook \cite{Cook2017} studied the singularity of adjacency matrices of random $r$-regular digraphs where he showed that $p_n =r^{-\Omega(1)}$
as long as $\min(r,n-r) \geq C \log^2 n$ for some absolute constant
$C$. A similar result was also established by Basak, Cook and
Zeitouni \cite{BCZ18} for sum of $r$ random permutation
matrices as long as $r \ge \log^{12-o(1)} n$. While these results are
highly non-trivial, the random matrices are still relatively
dense and so one might still believe that the matrices are still non-singular
with high probability. However, it has been conjectured that the
phenomenon continue to hold for extremely sparse matrices with some row and
column constraints. Let us mention here one such conjecture. For a positive integer $r$, let
$A_{n,r}$ be a uniformly distributed matrix on the set of all
$\{0,1\}$ square
matrices of size $n$ with $r$ ones in every row and column (i.e. $A_{n,r}$ is the
adjacency matrix of a random $r$-regular directed graph on $n$
vertices where loops are allowed.) Motivated by
Conjecture \ref{conj:sym} (to be mentioned below), the following has
been asked by Cook  \cite[Conjecture 2]{Cook2017}.
\footnote{It was originally stated as $p_n=O(n^{-c})$ for some absolute
    positive constant $c$.}  

\begin{conjecture}\label{conj:non-sym} For any $3\le r\le n-3$ we have
  $p_n=o(1)$. 
\end{conjecture}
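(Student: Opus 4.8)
\medskip
\noindent\textbf{A strategy toward Conjecture~\ref{conj:non-sym}.} I would not try to bound $p_n$ directly; instead I would deduce $p_n=o(1)$ from convergence of the \emph{full} cokernel distribution, as indicated in the abstract. Set $C_n:=\coker(A_{n,r})=\Z^n/A_{n,r}\Z^n$. Over $\Z$ this group is finite exactly when $A_{n,r}$ is nonsingular, so $p_n=\Prob(|C_n|=\infty)$. The mechanism is: if the laws of $C_n$ converge to a probability measure $\mu$ on isomorphism classes of \emph{finite} abelian groups --- i.e.\ $\Prob(C_n\cong H)\to\mu(H)$ for each finite abelian $H$ and $\sum_H\mu(H)=1$ --- then by Fatou no mass can escape to infinite groups and $p_n=\Prob(|C_n|=\infty)\to 1-\sum_H\mu(H)=0$. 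So it is enough to prove such convergence, and the key clause is that the limit $\mu$ be a genuine probability distribution supported on finite groups.

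To get convergence I would run the moment method for random abelian groups. For a finite abelian group $G$,
\[
\#\Sur(C_n,G)=\#\{\phi\in\Sur(\Z^n,G):\phi\circ A_{n,r}=0\}=\#\{v\in G^n: v\text{ generates }G,\ vA_{n,r}=0\},
\]
so $\E[\#\Sur(C_n,G)]=\sum_v\Prob(vA_{n,r}=0)$, the sum over generating tuples $v\in G^n$. The $j$th coordinate of $vA_{n,r}$ is the sum of the entries of $v$ over the $r$ in-neighbours of vertex $j$, and the columns of $A_{n,r}$ are exchangeable subject to the out-regularity constraint. Here is where I would invoke M\'esz\'aros's mixing results: in the relevant range they let one replace the constrained law of $A_{n,r}$ by an effectively product law on these local statistics, compute $\lim_n\Prob(vA_{n,r}=0)$, and thereby evaluate $M_G:=\lim_n\E[\#\Sur(C_n,G)]$ in closed form. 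I expect $M_G$ to factor over primes, with the factor at $p\nmid r$ of the usual Cohen--Lenstra shape but the factor at $p\mid r$ genuinely different: indeed $\mathbf 1\cdot A_{n,r}=r\mathbf 1\equiv 0\pmod p$, so the functional $\mathbf 1$ always descends to a nonzero functional on $C_n/pC_n$, forcing the $p$-part of $C_n$ to be nontrivial --- so the limit is not Cohen--Lenstra at such $p$.

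Finally I would feed the $M_G$ into a moment-uniqueness/robustness theorem for distributions on finite abelian groups (of the type in the second author's earlier work and in Sawin--Wood): one checks the $M_G$ do not grow too fast in $|G|$, concludes they are the moments of a \emph{unique} probability measure $\mu$ with $\sum_H\mu(H)=1$, and that moment convergence upgrades to distributional convergence; combined with the first paragraph this yields $p_n=o(1)$. The main obstacle --- and the reason this route currently establishes the conjecture only for the models where M\'esz\'aros-type mixing is available, rather than for the uniform $\{0,1\}$ model in full generality --- is obtaining estimates for $\Prob(vA_{n,r}=0)$ that are uniform enough over all $v$ and all $G$ simultaneously, especially at primes $p\mid r$, together with the growth/``no escape of mass'' bound that handles infinitely many primes at once instead of a single Sylow $p$-subgroup. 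For the uniform model the necessary local-limit and mixing input is not known in sufficient strength, so the conjecture remains open there; what the cokernel-distribution approach does deliver unconditionally is a conceptual re-derivation of the nonsingularity theorems of Huang and M\'esz\'aros, and of their analogues for the models treated here.
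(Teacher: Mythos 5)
Your strategy does follow the paper's overall route (reduce nonsingularity to convergence of cokernel laws via a Fatou argument, compute moments from M\'esz\'aros's mixing result, invoke a moment-uniqueness theorem), and you correctly identify the wrinkle at primes $p\mid r$ and the fact that this machinery only settles the case of fixed $r$, not $r$ growing with $n$. But there is a real gap in the way you phrase the Fatou step, and a missing ingredient for the uniform model.

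The Fatou mechanism in your first paragraph is applied to the \emph{full} cokernel $\coker A_{n,r}$, i.e.\ you ask for $\P(\coker A_{n,r}\cong H)\to\mu(H)$ with $\sum_H\mu(H)=1$ over all finite abelian $H$. This never happens for these models: for every finite abelian group $W$ one has $\lim_n\P(\coker A_{n,r}\isom W)=0$, because the limiting ``Cohen--Lenstra'' expression involves the globally-vanishing product $\prod_p(1-p^{-1})=0$. So as written your first paragraph gives $\sum_H\mu(H)=0$, not $1$, and Fatou yields nothing. The fix (this is the content of Corollary~\ref{C:getsing} in the paper) is to tensor with $\Z_p$ for a \emph{single} fixed prime $p$: a singular integral matrix has $\coker\otimes\Z_p$ infinite, so it suffices that $\P((\coker A_{n,r})\otimes\Z_p\cong W)\to\mu_p(W)$ for a probability measure $\mu_p$ on finite abelian $p$-groups. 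In particular the ``no escape of mass across infinitely many primes'' difficulty you raise at the end is not an obstacle at all once you localize; one prime suffices. You gesture at ``a single Sylow $p$-subgroup'' but never actually make that the object the Fatou argument is run on, which is exactly the missing move.

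Separately, your conclusion that the approach ``establishes the conjecture only for the models where M\'esz\'aros-type mixing is available, rather than for the uniform $\{0,1\}$ model'' sells the method short. The paper bridges from the permutation-sum model $D_n$ to the uniform random $r$-regular directed graph by standard contiguity and conditioning (Janson, Molloy--Robalewska--Robinson--Wormald), exactly as in the proof of Theorem~\ref{T:main2}: condition on no loops, then no multiple edges, then use contiguity with the uniform model. That step is elementary but essential, and without it you have only the analogue for $D_n$, not for $A_{n,r}$ itself. What genuinely remains open in Conjecture~\ref{conj:non-sym} is the regime where $r$ grows with $n$, since M\'esz\'aros's mixing theorem is for fixed $r$; that, not the choice of model at fixed $r$, is the real boundary of the method.

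Lastly, a minor correction in your third paragraph: the limiting moments $M_G$ for $D_n$ are not expected to factor nicely over all primes into a ``Cohen--Lenstra at $p\nmid r$, something new at $p\mid r$'' shape independent of $n$. At $p\mid r$ the limit of $\E[\#\Sur(\coker D_n,G)]$ depends on $n$ through $n_P$ (Corollary~\ref{C:mom}), so the moments along different subsequences of $n$ differ. This does not affect the nonsingularity deduction since one may work at a single prime $p\nmid r$, but it matters if you want distributional convergence of the full $P$-part.
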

For this model, the recent work by Litvak, Lytova, Tikhomirov,
Tomczak-Jaegermann and Youssef in \cite{Litvak2017} shows that $p_n \leq \frac{C\log^3r}{\sqrt{r}}$ as long as $C\leq r \leq
cn/\ln^2 n$ for some constants $c,C$. As a consequence, this bound implies that $p_n \to
\infty$ if $r \to \infty$. By a
more involved study of the structure of the eigenvectors of $A_{n,r}$, it has been
shown by the same goup of authors in \cite{Litvak2018a} that
asymptotically almost surely the rank of $A_{n,r}$ is at least $n-1$
as long as  $r>C$ for sufficiently large constant $C$. Finally, very
recently Huang \cite{Huang2018} confirmed Conjecture \ref{conj:non-sym}
for this regime of $r$ and proved the following. 

\begin{theorem}\label{T:D} Conjecture \ref{conj:non-sym} is true for fixed $r$. 
\end{theorem}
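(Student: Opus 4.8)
The plan is to derive non-singularity of $A_{n,r}$ from the convergence in distribution of its cokernel. Observe that $A_{n,r}$ is singular over $\Q$ precisely when $\det A_{n,r}=0$, and that in this case $A_{n,r}$ is singular modulo \emph{every} prime; hence for each prime $p$ we have the crude bound $p_n\le\Prob(\cok(A_{n,r}\otimes\F_p)\neq 0)$. So it suffices to produce an infinite set of primes $p$ for which $\lim_{n\to\infty}\Prob(A_{n,r}\bmod p\ \text{is invertible})$ exists and tends to $1$ as $p\to\infty$; we obtain this, and more, from a limiting distribution for the full cokernel. The primes $p\mid r$ are useless here, since $A_{n,r}\mathbf 1=r\mathbf 1\equiv 0$ forces singularity mod such $p$ for every $n$; we therefore restrict to $p\nmid r$, of which there are infinitely many.

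\emph{Step 1 (limiting moments).} Fix a prime $p\nmid r$ and a finite abelian $p$-group $G$. Identifying a homomorphism $\Z^n\to G$ with the vector $v\in G^n$ of images of the standard basis, one has
\[
\E\big[\#\Sur(\cok(A_{n,r}),G)\big]=\sum_{\substack{v\in G^n\\ \langle v\rangle=G}}\Prob\big(A_{n,r}^{\top}v=0\ \text{in}\ G^n\big),
\]
so each moment is governed by the probability that a fixed vector lies in the left kernel of $A_{n,r}$ modulo $|G|$. The plan is to evaluate the $n\to\infty$ limit of this probability using M\'esz\'aros's mixing result: rapid mixing of a natural (local) Markov chain on $r$-regular digraphs reduces the computation to a manageable count, and summing it over $v$ — organised by the isomorphism type of $\langle v\rangle$ and by how many coordinates of $v$ take each value of $G$ — produces an explicit limit $M_G$. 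Because $p\nmid r$, the constant vectors distinguished by the row and column sum constraints do not lie in the relevant kernels, and the count should come out exactly as for matrices with i.i.d.\ entries, giving $M_G=1$ for every finite abelian $p$-group $G$. (For $p\mid r$ one instead gets the new distributions referred to in the abstract, but those primes are irrelevant here.)

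\emph{Step 2 (moments to non-singularity).} Since $M_G=1$ grows slowly enough in $|G|$, the robustness of the moment problem for random finite abelian groups used in \cite{Wood2015a,Nguyen2018a} shows that $\cok(A_{n,r}\otimes\Z_p)$ converges in distribution to a probability measure $\mu_p$ on finite abelian $p$-groups, with no mass escaping to infinite groups, and that $\mu_p(\{0\})=\prod_{k\ge 1}(1-p^{-k})$ (this being the unique distribution all of whose surjection-moments equal $1$). A finite $\Z_p$-module is trivial iff its reduction mod $p$ is, so $\lim_{n\to\infty}\Prob(\cok(A_{n,r}\otimes\F_p)=0)=\prod_{k\ge 1}(1-p^{-k})$, and therefore
\[
\limsup_{n\to\infty}p_n\ \le\ 1-\prod_{k\ge 1}(1-p^{-k})\ \le\ \frac{1}{p-1}
\]
for every prime $p\nmid r$. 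Letting $p\to\infty$ forces $p_n=o(1)$, which is Conjecture \ref{conj:non-sym} for fixed $r$.

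The main difficulty is Step 1: turning M\'esz\'aros's mixing estimate into an \emph{exact} evaluation of the limiting moments, including the combinatorial bookkeeping over subgroups $\langle v\rangle\le G$ and the separate treatment of the constant directions forced by the fixed row and column sums. A secondary point, needed in Step 2, is to check that the moment bounds are sufficiently sub-exponential in $|G|$ that the limit is a genuine probability distribution on finite abelian groups rather than one leaking mass to infinity — which is essentially the asymptotic non-singularity statement itself.
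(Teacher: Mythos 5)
Your strategy is the same as the paper's: use M\'esz\'aros's equidistribution result to compute the limiting moments $\E(|\Sur(\cok,V)|)=1$ for $V$ of order prime to $r$, invoke the moment problem for random abelian groups from \cite{Wood2015a} to get convergence of the $p$-part of the cokernel to the Cohen--Lenstra distribution (this is Theorem~\ref{T:sandpile2}), and then deduce nonsingularity from the fact that the limit is a genuine probability measure supported on \emph{finite} $p$-groups. Your final step differs only cosmetically: you bound $\limsup_n p_n\le 1-\prod_{k\ge 1}(1-p^{-k})\le (p-1)^{-1}$ at each prime $p\nmid r$ and let $p\to\infty$, whereas the paper works at a single prime, noting that singularity over $\Q$ forces $(\cok A_{n,r})\tensor\Z_p$ to be infinite and applying Fatou's lemma (Lemma~\ref{L:Fatou} and Corollary~\ref{C:getsing}) to see that the probability of an infinite cokernel vanishes. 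Both deductions are valid.

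The one genuine gap is a model mismatch. M\'esz\'aros's mixing theorem (\cite[Theorem 3]{Meszaros2018}, Equation~\eqref{E:M}) is proved for $D_n$, the sum of $r$ independent uniform permutation matrices --- not for the adjacency matrix $A_{n,r}$ of a uniform $r$-regular digraph, which is what Conjecture~\ref{conj:non-sym} concerns (and it is an equidistribution statement for $qD_n$ on the set $R(q,r)$, not a Markov-chain mixing result on the space of $r$-regular digraphs). Your Step 1 applies the estimate directly to $A_{n,r}$, so as written it only proves nonsingularity for the sum-of-permutations model. The paper bridges the two models in the proof of Theorem~\ref{T:main2}: the uniform model $\Gamma(n,r)$ is contiguous with the union of $r$ independent $1$-regular digraphs conditioned on no multiple edges (\cite[Section 4]{Molloy1997}, \cite[Section 4]{Janson1995}), and that model is in turn $D_n$ conditioned on events (no loops, no multiple edges) whose probabilities stay bounded away from $1$ for fixed $r$, so an asymptotically-almost-sure property transfers. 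This step is standard but necessary and should be included. Beyond that, your Step 1 is only a sketch; the actual count --- that $|R(q,r)|=|V|^{n-1}$ and that the number of admissible surjections $q$ with $0\in R(q,r)$ is $|V|^{n-1}+o(|V|^n)$ when $\gcd(r,|V|)=1$ --- is carried out in Section~\ref{S:directed} and comes out as you anticipate.
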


In this note we will give another proof of Theorem~\ref{T:D} (see Theorem~\ref{T:main2}), via determination of the distribution of the cokernels of related matrices. Specifically we show the following.

\begin{theorem}\label{T:sandpile2}
For an integer $r\geq 3$, and an integer $n,$ let $D_{n}$ be the sum of $r$ independent uniform random $n\times n$ permutation matrices.
For  a finite set $P$ of  primes not dividing $r$, and a finite abelian group $V$ such that $|V|$ is a product of powers of primes in $P$, we have
\begin{equation}\label{E:probs2}
\lim_{\substack{n\ra\infty}} \P( \cok D_n\tensor\prod_{p\in P} \Z_p \isom V)=
 \frac{1}{|\Aut(V)|}\prod_{p\in P} \prod_{k\geq 1}(1-p^{-k}).
\end{equation}

\end{theorem}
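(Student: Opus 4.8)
The plan is to use the moment method for random groups together with the mixing results of Mészáros. For a finite abelian group $G$ whose order is a product of primes in $P$, the key quantity is the $G$-moment
\[
\E\bigl(\#\Sur(\cok D_n, G)\bigr) = \E\bigl(\#\Sur(\Z^n/D_n\Z^n, G)\bigr) = \E\bigl(\#\{x\in G^n : D_n x = 0\}\bigr),
\]
where the last equality holds because a surjection $\Z^n\ra G$ is a vector $x\in G^n$ with surjective coordinates, and it factors through the cokernel exactly when $x$ lies in the kernel of $D_n$ acting on $G^n$ (one must separate the "surjective $x$" count from the full count via inclusion-exclusion over subgroups of $G$, which is routine). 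So the heart of the matter is to compute $\lim_{n\ra\infty}\E\,\#\{x\in G^n : D_n x = 0 \text{ in } G^n\}$ and show it equals $1$ for every such $G$. Once all these moments are shown to converge to $1$, the moment-uniqueness machinery for distributions on finite abelian groups (as in Wood \cite{Wood2015a}, which applies since $P$ is a fixed finite set of primes) forces $\cok D_n\tensor\prod_{p\in P}\Z_p$ to converge in distribution to the unique measure with all moments equal to $1$, and that measure is the Cohen--Lenstra distribution whose mass at $V$ is exactly the right-hand side of \eqref{E:probs2}.

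The main work is therefore the moment computation. Write $D_n = \sum_{i=1}^r \Pi_i$ with $\Pi_i$ independent uniform $n\times n$ permutation matrices, so $D_n x = 0$ in $G^n$ means $\sum_i \Pi_i x = 0$. Condition on $x\in G^n$: by symmetry only the "type" of $x$ matters, i.e. the function $n_g = \#\{j : x_j = g\}$ for $g\in G$. For a fixed $x$, the event $\sum_i \Pi_i x = 0$ is a statement about where each permutation sends each coordinate; because $r\geq 3$ and $\gcd(|G|,r)=1$, for a "generic" configuration the relevant local constraint $\sum_i y_i = 0$ with $y_i$ ranging over $G$ has the expected density $|G|^{-1}$, and summing $|G|^{-n}$ over the $|G|^n$ choices of $x$ heuristically gives $1$. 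Making this rigorous is where Mészáros's mixing theorem enters: it controls the distribution of $(\Pi_1,\dots,\Pi_r)$ acting on configurations finely enough that one can evaluate $\P(\sum_i \Pi_i x = 0)$ up to $o(|G|^{-n})$ errors uniformly, or at least after discarding a negligible set of atypical $x$ (those whose type $n_g$ is far from uniform, which contribute negligibly by a counting/entropy estimate). The condition $p\nmid r$ is exactly what guarantees the local equation is nondegenerate over each $\Z/p$; if $p\mid r$ the map $x\mapsto rx$ is not injective and the moment picture changes, which is why that case is excluded here.

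The step I expect to be the main obstacle is converting Mészáros's mixing statement into the required asymptotic for $\E\,\#\{x : D_n x = 0\}$ with a genuinely controlled error term: one needs not just that $D_n$ equidistributes in some weak sense, but a quantitative handle strong enough to sum a quantity of size $|G|^{-n}$ over exponentially many $x$ and still see the correct constant $1$ in the limit. This likely requires splitting $G^n$ into the bulk of near-uniform-type vectors (handled by mixing plus a second-moment or local CLT type estimate) and a tail (handled by crude counting), and checking that the cross terms and the tail are both $o(1)$. A secondary technical point is the surjectivity correction: one runs the same computation for all subgroups $H\le G$ and Möbius-inverts over the subgroup lattice; since this only involves finitely many groups and each moment tends to $1$, the inversion is harmless, but it must be written out. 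Finally, one should note (as the abstract promises) that Theorem~\ref{T:D} follows immediately: $\P(\cok D_n\ne 0) = 1 - \P(\cok D_n \text{ is trivial as a } \prod_{p\in P}\Z_p\text{-module for all } P)$ is bounded below by the probability computed above for $V$ trivial, which is bounded away from $0$ — more carefully, nonsingularity over $\R$ follows because $\P(\det D_n = 0)\le \P(p\mid \det D_n)$ for any fixed $p\nmid r$, and the cokernel distribution shows this $p$-divisibility probability tends to $1 - \prod_{k\ge1}(1-p^{-k}) < 1$, hence $\P(\det D_n \ne 0)$ is bounded away from $0$; combined with the rank-$(n-1)$ result of \cite{Litvak2018a} or a direct argument one upgrades this to $p_n = o(1)$.
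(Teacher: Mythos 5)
Your overall strategy matches the paper's: compute $V$-moments of $\cok D_n$ via M\'esz\'aros's equidistribution result and then invoke moment-uniqueness for random finite abelian groups. But the details of your moment computation, as sketched, do not work, and the ``bulk/tail'' program you propose is both unnecessary and aimed at the wrong obstacle.

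The central issue is your heuristic ``$\P(D_n x=0)\approx |G|^{-n}$, so summing over $|G|^n$ vectors gives $1$.'' This is numerically coincidental but structurally wrong, and you cannot rigorize it. Since $D_n$ has all column sums equal to $r$, one has the identity $\sum_i (D_n x)_i = r\sum_j x_j$. Because $\gcd(r,|G|)=1$, the event $D_n x=0$ is \emph{impossible} unless $\sum_j x_j=0$. So $\P(D_n x=0)$ is \emph{exactly} zero for a $(1-1/|G|)$-fraction of $x\in G^n$, not $\approx |G|^{-n}$; and on the hyperplane $\sum x_j=0$, the correct normalization is $|G|^{-(n-1)}$ (one constraint is a consequence of the others). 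Discarding a ``negligible set of atypical $x$,'' as you propose, cannot recover this, since the set you must discard is almost all of $G^n$. This $(n-1)$-dimensional structure is precisely what M\'esz\'aros encodes in the set $R(q,r)$, and the paper's computation is really $|\{q\in\Sur(\Z^n,V): 0\in R(q,r)\}|\cdot |R(q,r)|^{-1}=(|V|^{n-1}+o(|V|^{n-1}))\cdot|V|^{-(n-1)}\to 1$. You cannot arrive at the answer without identifying this constraint.

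Relatedly, the ``main obstacle'' you flag --- converting M\'esz\'aros's mixing into a sharp enough estimate via a near-uniform-type bulk plus a second-moment/LCLT argument --- does not exist. M\'esz\'aros's Theorem~3 is already in the exact form needed: it bounds $\sum_{q}\max_{s\in R(q,r)}|\P(qD_n=s)-|R(q,r)|^{-1}|\to 0$, summed over all $q$ whose coordinates generate $V$, with no bulk/tail decomposition. Likewise your proposed M\"obius inversion over subgroups to extract the surjective count is unnecessary: the M\'esz\'aros statement is already restricted to generating $q$, which is exactly the surjectivity condition, so the moment is read off directly. Finally (though Theorem~\ref{T:D} is not the statement at hand), your closing argument for asymptotic nonsingularity is also incorrect as stated: bounding $\P(\det D_n=0)\leq\P(p\mid \det D_n)$ and showing the latter tends to $1-\prod_k(1-p^{-k})<1$ only shows $\P(\det D_n\neq 0)$ is bounded away from $0$, not that it tends to $1$; the correct deduction (used in the paper via Fatou's lemma) is that $\det D_n=0$ forces $(\cok D_n)_p$ to be infinite, and the existence of a limiting probability measure supported on finite $p$-groups forces the probability of an infinite $p$-part to vanish.
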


Our method to prove Theorems~\ref{T:sandpile2} (and the symmetric version, Theorem~\ref{T:sandpile} below) uses recent powerful mixing results of M\'esz\'aros \cite{Meszaros2018} to find the moments of this distribution and results of the second author  \cite{Wood2017} on the moment problem for random finite abelian groups.  We note that M\'esz\'aros \cite{Meszaros2018} proved his mixing results as a step towards determining the distribution of the cokernels of the Laplacians of the random graphs from Theorems~\ref{T:sandpile2} and Theorem~\ref{T:sandpile}, and so it is not surprising they are also useful for the closely related adjacency matrices.  The distribution in Theorem~\ref{T:sandpile2} is the Cohen-Lenstra distribution, as appears in the  case of independent entries \cite{Wood2015a} and in the Laplacian version of the underlying graphs \cite{Meszaros2018}.  However, unlike in these previous cases, the entire cokernels of $D_n$ are not distributed in the Cohen-Lenstra distribution, but rather only the prime-to-$r$ part of $\cok D_n$ follows the 
Cohen-Lenstra distribution.  In Section~\ref{S:newd}, we find the limiting moments for the distribution of the entire cokernel and remark on some new and interesting behavior of these distributions at primes dividing $r$.  

Now we discuss symmetric random matrices.
 Answering a question of Weiss, for symmetric random matrices $A_n$,  Costello, Tao and Vu
\cite{Costello06} showed in 2006 that if the upper diagonal entries of $A_n$
take values $\{0,1\}$ independently with probability 1/2 then
$p_n=O(n^{-\Omega(1)})$. This bound has been improved by the first author
in \cite{Nguyen2012} to $p_n =O(n^{-\omega(1)})$ and then to
$p_n=O(\exp(-n^c))$ in parallel by Vershynin \cite{Vershynin14}. For
sparse matrices,  it was shown by Costello and Vu \cite{CV08} that as
long as $A_n$ is the adjacency matrix of the Erd\H{o}s-R\'enyi graph
$G(n,p)$ with $\frac{(1+\epsilon)\log n}{n} \le p \le 1/2$, we have
$p_n=o(1)$. Here we note that the threshold $\frac{\log n}{n}$ is
optimal. 
For Laplacians of Erd\H{o}s--R\'{e}nyi
random graphs, Clancy, Leake, and Payne \cite{Clancy2015a}, conjectured a limiting distribution for their cokernels, which was proven by the second author \cite{Wood2017}. 

For symmetric matrices with further constraints, a popular
conjecture, first appeared in \cite[Question
10.1]{CV08} by Costello and Vu, and then subsequently in \cite[Conjecture
8.4]{Vu2008}, \cite[Conjecture 5.8]{Vu2014}, and \cite[Section 9,
Problem 7]{Frieze2014}, says that the adjacency matrix of random
regular graphs are non-singular asymptotically almost surely. 

\begin{conjecture}\label{conj:sym} Let $3\le r\le n$. Let $G(n,r)$ be a uniform random
  $r$-regular simple graph on the vertex set $\{1,\dots,n\}$ and let
  $A_{n,r}$ be the adjacency matrix of $G(n,r)$, then $p_n=o(1)$.
\end{conjecture}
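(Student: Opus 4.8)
The plan is the following. Although Conjecture~\ref{conj:sym} is open in general, for every fixed $r\geq 3$ it can be established by the cokernel method of this paper, in exact parallel with the way Theorem~\ref{T:sandpile2} gives Theorem~\ref{T:D}. The starting observation is soft: if $A_{n,r}$ is singular over $\Q$, then $\cok A_{n,r}=\Z^{n}/A_{n,r}\Z^{n}$ is infinite, and hence so is $\cok A_{n,r}\tensor\Z_{p}$ for every prime $p$. Thus $p_{n}\leq\P\bigl(\cok A_{n,r}\tensor\Z_{p}\text{ is infinite}\bigr)$ for any $p$, so it suffices to produce a single prime $p\nmid r$ for which the groups $\cok A_{n,r}\tensor\Z_{p}$ converge in distribution, as $n\to\infty$, to a probability distribution on finite abelian $p$-groups with no mass escaping to infinity. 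This convergence statement is the symmetric companion of Theorem~\ref{T:sandpile2} (Theorem~\ref{T:sandpile} below).

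To prove it I would run the moment method, as the excerpt advertises for Theorem~\ref{T:sandpile2}. For any finite abelian group $H$ one has
\[
\E\bigl(\#\Sur(\cok A_{n,r},H)\bigr)=\sum_{\substack{a\in H^{n}\\ \ang{a}=H}}\P\bigl(A_{n,r}a=0\bigr),
\]
since a surjection $\cok A_{n,r}\twoheadrightarrow H$ is the same as a vector $a\in H^{n}$ generating $H$ with $A_{n,r}^{T}a=A_{n,r}a=0$, i.e.\ with vanishing neighbour sums $\sum_{w\sim v}a_{w}=0$ at every vertex $v$ (counted with edge multiplicity). The regularity forces $\sum_{v}(A_{n,r}a)_{v}=r\sum_{w}a_{w}$, so when $p\nmid r$ only vectors with $\sum_{w}a_{w}=0$ contribute, while the symmetry $A_{n,r}=A_{n,r}^{T}$ produces the additional ``wasted'' constraints familiar from symmetric random matrices. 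M\'esz\'aros's mixing theorem \cite{Meszaros2018} is precisely the tool that makes $\P(A_{n,r}a=0)$ computable: for all but a negligible fraction of $a$ — those whose coordinate profile is far from equidistributed on $H$ — this probability equals $|H|^{-n}$ up to the forced linear and quadratic relations, and summing over the $\sim|H|^{n}$ admissible $a$ shows that $\E(\#\Sur(\cok A_{n,r},H))$ converges, for $|H|$ coprime to $r$, to the moment of a Cohen--Lenstra-type distribution carrying the quadratic correction appropriate to symmetric matrices (of size $\sim|\wt H|$, as in \cite{Clancy2015a,Wood2017}). For $p\mid r$ one instead sees the systematic rank coming from the eigenvector $(1,\dots,1)$, which is the source of the new distributions discussed in Section~\ref{S:newd}.

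These limiting moments are finite and grow slowly enough (bounded in terms of $|\wt H|$) that the moment theorem of \cite{Wood2017} applies: they are the moments of a unique probability distribution on finite abelian $p$-groups, $\cok A_{n,r}\tensor\Z_{p}$ converges to it, and in particular no mass escapes to infinity. Combining this with the reduction of the first paragraph gives $\P(\cok A_{n,r}\tensor\Z_{p}\text{ infinite})\to 0$, hence $p_{n}\to 0$.

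Two steps carry the real weight. The first is that M\'esz\'aros's mixing estimates are most naturally stated for the configuration/permutation model, whereas Conjecture~\ref{conj:sym} concerns the uniform random \emph{simple} $r$-regular graph; for fixed $r$ one bridges the gap using the classical fact that the configuration model is simple with probability bounded away from $0$ together with a switching argument controlling the effect of the few loops and multi-edges on the cokernel. The second, and genuinely hard, step is obtaining quantitative, uniform-in-$a$ control of the ``bad'' vectors $a$ — those far from equidistributed, together with the non-generating ones — so that their total contribution to the moment sum is $o(1)$; this is where one must use M\'esz\'aros's estimates in effective form, and it is the crux of the whole argument. By comparison, the Fourier computation for the well-distributed $a$ and the inclusion--exclusion passage from all $a$ to generating $a$ are routine.
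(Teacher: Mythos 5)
Your plan is essentially the one the paper carries out in the proof of Theorem~\ref{T:main}: transfer to a multigraph model where M\'esz\'aros's mixing theorem applies, compute moments $\E\bigl(\#\Sur(\cok C_n,H)\bigr)\to|\wedge^2 H|$ for $|H|$ prime to $2r$, identify the limit as the symmetric Cohen--Lenstra distribution, and invoke Fatou (Lemma~\ref{L:Fatou} / Corollary~\ref{C:getsing}) to kill the singular mass. Two of your intermediate steps, however, are not quite what makes the argument go through, and one restriction is missing. First, the bridge between models is not a switching argument: the paper conditions the configuration model $G^*(n,r)$ on simplicity to recover $G(n,r)$ (using that the simplicity probability is bounded away from $0$), and then passes from the configuration model with no loops to the union-of-$r$-matchings model $\hat G(n,r)$ by Janson's \emph{contiguity} theorem. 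M\'esz\'aros's Theorem~4 is stated for $\hat G(n,r)$ --- the sum of $r$ independent permutation/matching matrices --- not for the configuration model, so the contiguity step is essential and cannot be replaced by a local switching estimate. Second, the union-of-matchings model only exists for $n$ even, so this method (like Theorem~\ref{T:main}) establishes the conjecture only along even $n$ for a fixed $r$; for even $r$ and odd $n$, where $r$-regular graphs do exist, the method as written says nothing, so the claim ``for every fixed $r\geq 3$ it can be established'' overstates what the argument yields.

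Finally, a technical point you elide: the moment theorem of \cite{Wood2017} identifies the individual limits $\lim_n \P((\cok C_n)_p\isom W)$ once the moments converge and satisfy the growth hypothesis, but it does not by itself tell you that these limits sum to $1$; a priori Fatou only gives $\sum_W \mu(W)\le 1$. The paper closes this by an explicit computation, via \cite[Proposition 7]{Clancy2015} and an orbit--stabilizer argument over symmetric perfect pairings, that the right-hand side of \eqref{E:probs} sums to $1$ over finite abelian $p$-groups. Without that verification the conclusion ``no mass escapes to infinity'' does not follow, so this needs to be an explicit step in the write-up rather than an automatic consequence of the moment theorem.
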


Compared to the non-symmetric model, this problem is much less studied. There is recent work by Landon, Sosoe and Yau
\cite{Landon2016} where it can be deduced that $p_n=o(1)$ as long as $r \ge n^\varepsilon$
for any given $\varepsilon>0$. It seems plausible that the method
there can be extended all the way to $r = \omega(\log n)$, but it
seems to the current authors that the singularity problem for smaller
$r$ requires significantly new ideas. 
M\'esz\'aros \cite{Meszaros2018} has recently shown that $p_n\ra 0$ for the adjacency matrices of the random multi-graphs on an even number of vertices that are given by the union of $r$ independent perfect matchings, as well as determining the cokernel distribution of the Laplacians of theses graphs.  In this note we prove the following result on the distribution of cokernels of the adjacency matrices, using results from \cite{Meszaros2018}  and \cite{Wood2017} as discussed above.
\begin{theorem}\label{T:sandpile}
For an integer $r\geq 3$, and an even integer $n,$ let $C_n$ be the adjacency matrix of the multigraph given by taking the union of $r$ independent uniform random perfect matchings on $n$ labelled vertices.  
For  a finite set $P$ of odd primes not dividing $r$, and a finite abelian group $V$ such that $|V|$ is a product of powers of primes in $P$, we have
\begin{equation}\label{E:probs}
\lim_{\substack{n\ra\infty\\ n \textrm{ even}}} \P( \cok C_n\tensor\prod_{p\in P} \Z_p \isom V)=
 \frac{\#\{\textrm{symmetric, bilinear, perfect } \phi:V\times V \ra \C^* \}}{|V||\Aut(V)|}\prod_{p\in P} \prod_{k\geq 0}(1-p^{-2k-1}).
\end{equation}
\end{theorem}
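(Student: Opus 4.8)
The strategy is to establish convergence of the moments of $\cok C_n$ and then invoke the moment-determinacy results of \cite{Wood2017}. Recall that for a finite abelian group $G$, the $G$-moment of $\cok C_n$ is $\E\bigl[\#\Sur(\cok C_n, G)\bigr] = \E\bigl[\#\{\text{surjections } \Z^n \to G \text{ factoring through } C_n\}\bigr]$. The first step is to rewrite this moment: a surjection $\phi\colon \Z^n \to G$ factors through $C_n$ precisely when $\phi \circ C_n = 0$, i.e. when the composite kills every column of $C_n$. Writing $C_n = \sum_{i=1}^r M_i$ where each $M_i$ is the adjacency matrix of a random perfect matching on $n$ vertices (so $M_i$ is a symmetric permutation matrix of a fixed-point-free involution), the condition $\phi C_n = 0$ becomes a system of linear conditions on the tuple $(\phi(e_1), \dots, \phi(e_n)) \in G^n$. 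Since $G$ has odd order and $|G|$ is coprime to $r$, one expects, in the limit, the values $\phi(e_j)$ to be asymptotically equidistributed in $G$ subject to these matching constraints, which is exactly the kind of statement the mixing results of M\'esz\'aros \cite{Meszaros2018} are designed to supply.

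The key step is therefore to feed the union-of-$r$-matchings model into M\'esz\'aros's mixing theorem to conclude that, for fixed $G$, the distribution of $(\phi(e_j))_j \bmod (\text{matching relations})$ converges, and to extract from this the limiting moment $\lim_{n\to\infty,\, n \text{ even}} \E[\#\Sur(\cok C_n, G)]$. For the symmetric model the natural guess — and what one should verify — is that this limit equals $\#\{\text{symmetric bilinear } \phi\colon G \times G \to \C^*\}$, the moment that \cite{Wood2017} shows characterizes the distribution on the right-hand side of \eqref{E:probs}. Concretely, one should check that the mixing result forces the $\phi(e_j)$ associated to the two endpoints of a matching edge to become, in a suitable averaged sense, a uniformly random pair subject to the symmetry coming from $C_n = C_n^T$, and that across the $r$ independent matchings these contributions multiply, producing the symmetric-pairing count rather than the plain count $\#\Hom(G,G) = |G|\cdot\#\{\text{sym. bilinear perfect pairings}\}/(\dots)$ that shows up in the non-symmetric Theorem~\ref{T:sandpile2}. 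This is the same mechanism by which symmetric random matrices produce the Cohen--Lenstra--with--pairing distribution rather than the plain Cohen--Lenstra distribution.

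Once the moments are shown to converge to $\#\{\text{symmetric bilinear } \phi\colon G\times G\to\C^*\}$ for every finite abelian group $G$ of odd order supported on primes away from $r$ (and one notes these moments do not grow too fast — they are bounded by $|G|^{\mathrm{rk}(G)}$ or so — so the moment problem is determined), the result of \cite{Wood2017} identifies the limiting distribution of $\cok C_n \otimes \prod_{p\in P}\Z_p$ uniquely as the one whose probabilities are given by the right-hand side of \eqref{E:probs}. The passage from "moments of $\cok C_n$ over all finite abelian groups" to "distribution of $\cok C_n \otimes \prod_{p \in P}\Z_p$" is handled exactly as in \cite{Nguyen2018a, Wood2017}: restrict attention to groups $G$ whose order is a product of powers of primes in $P$. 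I expect the main obstacle to be the second step: carefully translating M\'esz\'aros's mixing statement (which is phrased for his purposes, e.g.\ toward Laplacian cokernels) into a clean asymptotic for $\E[\#\Sur(\cok C_n, G)]$ for the \emph{adjacency} matrix, and in particular correctly bookkeeping the symmetry constraint so that the symmetric-pairing count emerges with the right normalization — the factor $\frac{1}{|V|}$ and the product $\prod_{k\ge 0}(1-p^{-2k-1})$ in \eqref{E:probs} are exactly what must drop out, and getting the parity/symmetry combinatorics right is where the real work lies.
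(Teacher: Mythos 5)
Your overall architecture --- compute the limiting $V$-moments of $\cok C_n$ via M\'esz\'aros's mixing theorem and then invoke the moment-determinacy results of \cite{Wood2017} --- is the same as the paper's. But the one quantitative claim you commit to is wrong: the limiting moment $\lim_{n}\E(|\Sur(\cok C_n,V)|)$ is \emph{not} $\#\{\textrm{symmetric bilinear } \phi:V\times V\to\C^*\}$. That count equals $|(V\tensor V)/\langle a\tensor b-b\tensor a\rangle|$ (for $V=\Z/p\Z$ it is $p$), whereas the correct limiting moment --- the one that \cite{Wood2017} and \cite{Clancy2015} attach to the distribution on the right-hand side of \eqref{E:probs} --- is $|\wedge^2 V|$ (which is $1$ for $V=\Z/p\Z$). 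The count of symmetric, bilinear, \emph{perfect} pairings appears in the formula for the limiting probabilities, not in the moments; conflating the two points you at the wrong limiting distribution. Your parenthetical guess for the non-symmetric model, $\#\Hom(G,G)$, is likewise off: the directed-model moments are all equal to $1$.

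Beyond that, the step you flag as ``where the real work lies'' is left entirely undone, and it is where the paper's proof actually lives. Concretely: for $q\in\Sur(\Z^n,V)$, the vector $qC_n$ is constrained to a set $R^S(q,r)$ (those $s$ with $\langle q,s\rangle\in I_2(V)$ and $\sum_i s_i=r\sum_i q_i$), and M\'esz\'aros's theorem says $qC_n$ is asymptotically uniform on that set, so the moment becomes $\sum_{q:\,0\in R^S(q,r)}|R^S(q,r)|^{-1}$. One must then (i) compute $|R^S(q,r)|=|V|^{n-1}/|\wedge^2 V|$ when $\operatorname{MinCos}_q=V$ and $|V|$ is odd, via surjectivity of $s\mapsto(\sum_i q_i\wedge s_i,\sum_i s_i)$, and (ii) count the admissible $q$, which is where the hypotheses that the primes are odd and do not divide $r$ enter, yielding $|V|^{n-1}+o(|V|^n)$ and hence the moment $|\wedge^2 V|$. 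Finally, the paper does not read the probabilities off the moments directly; it compares with a Haar-random symmetric matrix $H_n$ over $\prod_{p\in P}\Z_p$, whose moments agree and whose cokernel distribution is known by \cite[Corollary 9.2]{Wood2017}, and then applies the uniqueness statement \cite[Theorem 8.3]{Wood2017}. So: right skeleton, wrong target moment, and the central computation missing.
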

The distribution in Theorem~\ref{T:sandpile} is the same as appears in the  cases of independent entries, Laplacians of Erd\H{o}s--R\'{e}nyi
random graphs \cite{Wood2017}, and in the Laplacian version of the underlying graphs in Theorem~\ref{T:sandpile} \cite{Meszaros2018}.
We also explain in Section~\ref{S:ns} how the limiting distribution of cokernels of these matrices can be used to give a different proof of M\'esz\'aros's result on the asymptotic nonsingularity of the $C_n$, and point out that M\'esz\'aros's result \cite[Proposition 5]{Meszaros2018} has the following simple corollary for the uniform $r$-regular graphs.
\begin{theorem}\label{T:main}
Given a fixed $r\geq 3$, and $p_n$ as in Conjecture~\ref{conj:sym}, we have
$$
\lim_{\substack{n\ra\infty\\ n \textrm{ even}}} p_n=0.
$$
\end{theorem}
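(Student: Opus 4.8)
The plan is to obtain Theorem~\ref{T:main} by combining two inputs: the asymptotic nonsingularity of the adjacency matrices $C_n$ of the matching-union model, and the comparison between that model and the uniform $r$-regular model provided by \cite[Proposition 5]{Meszaros2018}. Whether the adjacency matrix of a (multi)graph is singular over $\Q$ depends only on the graph --- indeed only on its edge multiset --- so it is exactly the sort of graph property that such a comparison of ensembles transfers. Concretely, let $M_n$ be the loopless random $r$-regular multigraph given by the union of $r$ independent uniform perfect matchings on $n$ even vertices, so that $C_n$ is its adjacency matrix. One has $\P(M_n \text{ is simple}) \geq c_r > 0$ for all large even $n$ (the number of multi-edges has a Poisson-type limit, as in the pairing model), and \cite[Proposition 5]{Meszaros2018} relates the two ensembles --- via conditioning $M_n$ on simplicity --- in a way that carries an asymptotically almost sure graph property of $M_n$ conditioned on being simple over to an asymptotically almost sure property of $G(n,r)$. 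Thus it suffices to show $\P(C_n \text{ singular}) \to 0$, since then $\P(C_n \text{ singular} \mid M_n \text{ simple}) \leq \P(C_n \text{ singular})/c_r \to 0$, so ``nonsingular adjacency matrix'' holds a.a.s.\ for $M_n$ conditioned on simple, hence a.a.s.\ for $G(n,r)$, i.e.\ $p_n \to 0$.

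The fact that $\P(C_n \text{ singular}) \to 0$ is due to M\'esz\'aros \cite{Meszaros2018}, but it also falls out of Theorem~\ref{T:sandpile}, which is the route I would take (this is the content of Section~\ref{S:ns}). Fix an odd prime $p$ with $p \nmid r$; such a $p$ exists since $r$ has only finitely many prime divisors. The group $\cok C_n$ is finitely generated, so it is infinite if and only if it has positive free rank, if and only if $\cok C_n \tensor \Z_p$ is infinite; hence $\P(C_n \text{ singular}) = \P(\cok C_n \tensor \Z_p \text{ infinite})$. Apply Theorem~\ref{T:sandpile} with $P = \{p\}$: for each finite abelian $p$-group $V$, $\P(\cok C_n \tensor \Z_p \isom V)$ converges to the corresponding mass of the Clancy--Leake--Payne distribution, and these masses sum to $1$ (equivalently, $\sum_V \#\{\text{symmetric, bilinear, perfect } \phi\colon V \times V \to \C^*\}/(|V|\,|\Aut V|) = \prod_{k \geq 0}(1-p^{-2k-1})^{-1}$, a standard identity; see \cite{Wood2017}). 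Therefore no mass escapes to infinity: given $\varepsilon > 0$, pick a finite set $S$ of finite abelian $p$-groups whose masses sum to more than $1-\varepsilon$; then $\limsup_n \P(\cok C_n \tensor \Z_p \notin S) \leq \varepsilon$, and since $\{\cok C_n \tensor \Z_p \text{ infinite}\} \subseteq \{\cok C_n \tensor \Z_p \notin S\}$ we conclude $\P(C_n \text{ singular}) \to 0$, and hence $p_n \to 0$.

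The only substantive ingredient is external: the precise statement of \cite[Proposition 5]{Meszaros2018} and its correct application, i.e.\ establishing that the matching-union multigraph is simple with probability bounded away from $0$ uniformly in $n$ and that the resulting conditioned ensemble is comparable to (e.g.\ contiguous to, or dominated up to a constant by) the uniform $r$-regular ensemble, so that the event ``singular adjacency matrix'' may be transported between the two models. Once that comparison is in hand the deduction is routine. The remaining points are entirely elementary and already recorded: an odd prime $p \nmid r$ always exists; ``$\cok$ infinite'' is captured by ``$\cok \tensor \Z_p$ not isomorphic to any finite group''; and the Clancy--Leake--Payne masses form a probability distribution. I would also check the small cases of the model comparison (e.g.\ $r = 3$, where the auxiliary prime is forced to be $p \geq 5$), but no difficulty is expected there.
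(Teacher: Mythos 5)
Your plan matches the paper's proof almost exactly: replace the uniform $r$-regular model by the matching-union model via a standard comparison of ensembles, derive nonsingularity of $C_n$ from Theorem~\ref{T:sandpile} by fixing an odd prime $p\nmid r$ and showing that the limiting cokernel measure is a genuine probability measure so that no mass escapes to infinity, and then transport the a.a.s.\ property back. Your ``pick a finite set $S$ of $p$-groups with mass $>1-\varepsilon$'' argument is precisely the Fatou argument in Lemma~\ref{L:Fatou}/Corollary~\ref{C:getsing}, and the identity you invoke for the total mass is what the paper gets from \cite[Proposition 7]{Clancy2015}. So the mathematics is right.

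The one thing to fix is an attribution error. You cite \cite[Proposition 5]{Meszaros2018} as the ensemble-comparison result; in fact that proposition is M\'esz\'aros's nonsingularity result for the matrices $C_n$ themselves (i.e.\ the alternative to the Theorem~\ref{T:sandpile}-based route you chose), not a statement about the relationship between $\hat G(n,r)$ and $G(n,r)$. The comparison is the older and standard chain: Bollob\'as's configuration model $G^*(n,r)$, which when conditioned on no loops and no multi-edges gives $G(n,r)$ exactly and has $\P(\text{simple})$ bounded away from $0$ for fixed $r$; together with Janson's contiguity theorem \cite[Theorem 11]{Janson1995} (see also \cite[Theorem 3]{Molloy1997}) identifying the matching-union model $\hat G(n,r)$ as contiguous to $G^*(n,r)$ conditioned on having no loops. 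You were careful to phrase the needed input abstractly (``contiguous to, or dominated up to a constant by''), so the structure of your argument survives once the citation is corrected; but as written it would send a reader to the wrong result, and you should also note that $\hat G(n,r)$ conditioned on simple is not $G(n,r)$ exactly --- it is only contiguous to it through the configuration-model chain --- which is the reason the Bollob\'as and Janson inputs are both genuinely needed.
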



\subsection{Notation}
For a prime $p$, we write $\Z_p$ for the $p$-adic integers.
For a finite abelian group $V$ and a prime $p$, we write $V_p$ for $V\tensor \Z_p$.  
If $V$ is a finite abelian group, then $V_p$ is the Sylow $p$-subgroup of $V$.
If $V=\Z^k \times T$, for a finite abelian group $T$, then $V_p=\Z_p^k \times T_p$.
For a set of primes $P$, we let $V_P=\prod_{p\in P}V_p$.

For two groups $G,H$, we write $\Hom(G,H)$ for the set of group homomorphisms from $G$ to $H$ and $\Sur(G,H)$ for the set of surjective group homomorphisms from $G$ to $H$.  We write $\Aut(G)$ for the set of group automorphisms of $G$.  We use $\isom$ to denote an isomorphism of groups.

For an $n\times n$ integer matrix $M$, we write $\cok M$ for $\Z^n/M(\Z^n)$, i.e. the cokernel of the map $M:\Z^n\ra \Z^n$.

We use $\P$ for probability and $\E$ for expectation.
\section{Cokernel distributions for non-symmetric matrices: proof of Theorem~\ref{T:sandpile}}

In this section we give the proof of Theorem~\ref{T:sandpile}. 
A.~M\'esz\'aros \cite{Meszaros2018} has recently studied the sandpile group of the graphs in the theorem, and we first explain a key result of his that we will use.
To start, we let $V$ be any abelian group. We consider vectors $q\in V^n=\Hom(\Z^n, V)$, with coordinates $q_i\in V$ that give the image of the $i$th standard basis vector.
For a vector $q\in V^n$, let $\operatorname{MinCos}_q$ be the minimal coset of $V$ containing all the entries $q_i$ of $q$.  If $\operatorname{MinCos}_q=\gamma +H$ for $\gamma\in V$ and $H$ a subgroup of $V$, we write $r\cdot \operatorname{MinCos}_q$ for $r\gamma +H$.  
For $s\in V^n$, we define $\langle q,s \rangle\in V\tesnor V$ to be $\sum_{i=1}^n q_i\tensor s_i$. By definition of $C_n$, we have that
$\langle q,qC_n \rangle$ is a sum of elements of the form $\langle q,q' \rangle$ where $q'$ is obtained from $q$ by performing an involution with no fixed points on the coordinates.  Thus $\langle q,qC_n \rangle$ is in the subgroup $I_2(V)$ of $V\tensor V$ generated by elements of the form $a\tensor b+b \tensor a$.  
Let $R^S(q,r)=\{s\in (r\cdot \operatorname{MinCos}_q)^n | \langle q,s \rangle \in I_2(V) \textrm{ and } \sum_{i=1}^n s_i = r \sum_{i=1}^n q_i \}$.
Note that $q C_n \in R^S(q,r)$.    
M\'esz\'aros \cite[Theorem 4]{Meszaros2018} proved that 
 $$
\lim_{\substack{n\ra\infty\\ n \textrm{ even}}}  \sum_{\substack{q\in V^n 
}} \max_{s\in R^S(q,r)} |\P(q C_n =s)-|R^S(q,r)|^{-1} |=0.
 $$
M\'esz\'aros used this result to determine the moments of the cokernel of the graph Laplacian of $\hat{G}(n,r)$, and we can similarly use it to determine the moments of the cokernel $\cok C_n$ of the adjacency matrix.
 Note that $q\in \Hom(\Z^n, V)$ is surjective if and only if the coordinates $q_i$ of $q$ generate $V$, and that $q$ descends to a homomorphism from $\cok C_n$ if and only if $q C_n=0$.  So $\E(|\Sur(\cok C_n,V)|)$  is exactly the expected number of $q\in V^n$ such that $q C_n=0$ and the $q_i$ generate $V$ (as in \cite[Section 3]{Wood2017} or \cite[Proposition 34]{Meszaros2018}).  
We conclude
\begin{align*}
\lim_{\substack{n\ra\infty\\ n \textrm{ even}}} \E( |\Sur(\cok C_n,V)|)&= \lim_{\substack{n\ra\infty\\ n \textrm{ even}}} \sum_{\substack{q\in \Sur(\Z^n,V)
\\ 0\in R^S(q,r)
}}
|\P(q C_n=0)|\\
&= \lim_{\substack{n\ra\infty\\ n \textrm{ even}}} \sum_{\substack{q\in \Sur(\Z^n,V)
\\ 0\in R^S(q,r)
}}
|R^S(q,r)|^{-1}. \\
\end{align*}

Now we suppose that $|V|$ is odd, and we will show that for $q\in \Sur(\Z^n,V)$ with $\operatorname{MinCos}_q=V$, we have
$|R^S(q,r)|=|V|^{n-1}/|\wedge^2 V|,$ where $\wedge^2 V$ is the abelian group that is the quotient of $V\tensor V$ by the subgroup generated by elements of the form $a\tensor a$.  Note that since $|V|$ is odd, we have that $I_2(V)$ is the subgroup generated by elements of the form $a\tensor a$, and so $V/I_2(V)=\wedge^2 V$.  Our claim about the size of $|R^S(q,r)|$ will then follow from the fact that the map
\begin{align*}
V^n &\ra \wedge^2 V \times V\\
s &\mapsto \left( \sum_{i=1}^n q_i \wedge s_i , \sum_{i=1}^n s_i \right)
\end{align*}
is surjective.  The surjectivity to the first factor follows, because for $v,w\in V$, if $v=\sum_{i=1}^n a_iq_i$ for $a_i\in \Z$ (which we have from the surjectivity of $q$), then $s$ with $s_i= a_iw$ maps to $v\wedge w$ in the first factor.
Note if $s \mapsto (b,c)$ then if $s'$ has $s'_i=s_i$ for $i\neq j$ and $s_j=s_j+ q_j$, then
$s'\mapsto (b,c+q_j)$, and it follows the map above is surjective.

For the rest of this proof, we assume that $V$ is as in the theorem statement.
We will count the number of $q\in \Sur(\Z^n,V)$
such that $0\in\R^S(q,r)$.  Note that $0\in\R^S(q,r)$ is equivalent to $0\in r\cdot\operatorname{MinCos}_q$ and  $r \sum_{i=1}^n q_i=0$.
Since $r$ is relatively prime to $|V|$, that is equivalent to $\operatorname{MinCos}_q$ being a subgroup and $\sum_{i=1}^n q_i=0$.
Note that $q\in \Sur(\Z^n,V)$ implies that the $q_i$ generate $V$, so any subgroup that all the $q_i$  belong to in fact must be $V$.  
Conversely, if $q\in\Hom(\Z^n,V)$ has $\operatorname{MinCos}_q=V$, then $q\in \Sur(\Z^n,V)$.
So
$$
\#\{ q\in \Sur(\Z^n,V) | 0\in\R^S(q,r) \}\leq  \#\{ q\in \Hom(\Z^n,V) | \sum_{i=1}^n q_i=0 \}=|V|^{n-1}
$$
and
$$
\#\{ q\in \Sur(\Z^n,V) | 0\in\R^S(q,r) \}\geq \#\{ q\in \Hom(\Z^n,V) | \sum_{i=1}^n q_i=0 \} - \sum_{\substack{\gamma+H\\ \textrm{ proper coset of $V$}}}
\#\{ q\in  (\gamma+H)^n \}.
$$
Note that 
$$\sum_{\substack{\gamma+H\\ \textrm{ proper coset of $V$}}} |H|^n=o(|V|^n)$$ (where the constant in the little $o$ notation may depend on $|V|$).
So
$$
\#\{ q\in \Sur(\Z^n,V) | 0\in\R^S(q,r) \}=|V|^{n-1}+o(|V|^n).
$$


 Putting together the above, for any $V$ as in the theorem statement, we conclude that
\begin{align*}
\lim_{\substack{n\ra\infty\\ n \textrm{ even}}} \E( |\Sur(\cok C_n,V)|)&=
  |\wedge^2 V|.
\end{align*}
  Let $H_n$ be a random $n\times n$ symmetric matrix with entries on and above the diagonal independently drawn from Haar measure in $\prod_{p\in P} \Z_p$.
 By \cite[Corollary 9.2]{Wood2017}, we have
 \begin{align*}
\lim_{\substack{n\ra\infty}} \P( \cok H_n\isom V)&=
 \frac{\#\{\textrm{symmetric, bilinear, perfect } \phi:V\times V \ra \C^* \}}{|V||\Aut(V)|}\prod_{p\in P}\prod_{k\geq 0}(1-p^{-2k-1}).
\end{align*} 
By \cite[Theorem 11]{Clancy2015} (or see \cite[Theorem 6.1]{Wood2017}) we have
 \begin{align*}
\lim_{\substack{n\ra\infty}} \E( |\Sur(\cok H_n,V)|)&=|\wedge^2 V|.
\end{align*} 
Thus by \cite[Theorem 8.3]{Wood2017}, which says that these limiting moments determine a unique limiting distribution, we conclude for every integer $a$ that is a product of powers of the primes in $P$, 
 \begin{align*}
\lim_{\substack{n\ra\infty\\ n \textrm{ even}}} \P( \cok C_n \tensor \Z/a\Z \isom V)=\lim_{\substack{n\ra\infty}} \P( \cok H_n\tensor \Z/a\Z \isom V).
\end{align*}
Note that if $W$ is a finitely generated abelian group and $V$ is a finite abelian group  as  in the theorem statement, and $V$ has exponent $a=\prod_{p\in P } p^{e_p}$, then $V\tensor \Z/a'\Z\isom V$, and further, we have $W_P \isom V$ if and only if
$W \tensor \Z/a'\Z \isom V$, where $a'=\prod_{p\in P } p^{e_p+1}$. From this we conclude the theorem.

\section{Directed graph (non-symmetric matrix) analog: proof of Theorem~\ref{T:sandpile2}}\label{S:directed}

In this section, we give a proof of Theorem~\ref{T:sandpile2}, which is  simpler than our proof of Theorem~\ref{T:sandpile} above.
We start with a result of M\'esz\'aros \cite{Meszaros2018}, which he proved in order to determine the asymptotic distribution of the sandpile groups with sink of the directed graphs associated to $D_{n}$.
Let $V$ be any abelian group. We consider vectors $q\in V^n=\Hom(\Z^n, V)$, with coordinates $q_i\in V$ that give the image of the $i$th standard basis vector.
Recall the notation $\operatorname{MinCos}_q$ from the proof of Theorem~\ref{T:main}.
Let $R(q,r)=\{s\in (r\cdot \operatorname{MinCos}_q)^n |  \sum_{i=1}^n s_i = r \sum_{i=1}^n q_i \}$.
Note that $q D_n \in R(q,r)$.    
M\'esz\'aros \cite[Theorem 3]{Meszaros2018} proved that 
\begin{equation}\label{E:M}
\lim_{\substack{n\ra\infty}}  \sum_{\substack{q\in V^n \\ q_i \textrm{ generate $V$}
}} \max_{s\in R(q,r)} |\P(q C_n =s)-|R(q,r)|^{-1} |=0.
\end{equation}

As above in the undirected case and in \cite{Meszaros2018} for the graph Laplacian, we use this to determine the moments of $\cok D_n$.
 As in the proof of Theorem~\ref{T:sandpile}, we  conclude
\begin{align*}
\lim_{\substack{n\ra\infty}} \E( |\Sur(\cok D_n,V)|)&= \lim_{\substack{n\ra\infty}} \sum_{\substack{q\in \Sur(\Z^n,V)
\\ 0\in R(q,r)
}}
|\P(q D_n=0)|\\
&= \lim_{\substack{n\ra\infty}} \sum_{\substack{q\in \Sur(\Z^n,V)
\\ 0\in R(q,r)
}}
|R(q,r)|^{-1}. \\
\end{align*}

For $q\in \Sur(\Z^n,V)$ with $\operatorname{MinCos}_q=V$, we have
$|R(q,r)|=|V|^{n-1}$. 
For the rest of this proof, we assume that $r$ and $|V|$ are relatively prime.
The proof of Theorem~\ref{T:main} shows that for
$q\in \Sur(\Z^n,V)$, and $r$ relatively prime to $|V|$, and $0\in\R^S(q,r)$, we have $\operatorname{MinCos}_q=V,$ and further shows that
$$
\#\{ q\in \Sur(\Z^n,V) | 0\in\R(q,r) \}=|V|^{n-1}+o(|V|^n).
$$


For $V$ as in the theorem statement, we have concluded above that 
\begin{align*}
\lim_{\substack{n\ra\infty}} \E( |\Sur(\cok D_n,V)|)&=
1.
\end{align*}
Thus by \cite[Theorem 3.1 and Proof of Corollary 3.4]{Wood2015a} (see also \cite[Proposition 8.3]{Ellenberg2016} for the case $P=\{p\}$), we conclude the theorem.

\section{Nonsingularity over $\R$}\label{S:ns}
In this section, we show the relationship between limiting cokernel distributions and non-singularity.  The key is that singular matrices can be detected by their infinite cokernels, even after tensoring with $\Z_p$.  

\begin{lemma}\label{L:Fatou}
If $G_n$ for $n\geq 0$ is a sequence of random abelian groups, and for each finite abelian group $W$, the limit
$\lim_{n\ra\infty} \P(G_n\isom W)$ exists, and $\mu$ defined by $\mu(W):= \lim_{n\ra\infty} \P(G_n\isom W)$ for each finite abelian group $W$ is a probability measure on the set finite abelian groups, then $\lim_{n\ra\infty} \P(G_n \textrm{ is infinite})=0$.
\end{lemma}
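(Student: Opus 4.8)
The plan is to use a standard Fatou-type / tightness argument. Let $\mu$ be the limiting measure on finite abelian groups, which by hypothesis is a genuine probability measure, so $\sum_{W} \mu(W) = 1$, the sum ranging over all (isomorphism classes of) finite abelian groups. The goal is to show the probability mass cannot "escape to infinity," i.e. $\limsup_{n\to\infty}\P(G_n\text{ is infinite})=0$.

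First I would fix $\epsilon>0$. Since $\sum_W \mu(W)=1$, I can choose a finite set $\mathcal{S}$ of finite abelian groups with $\sum_{W\in\mathcal{S}}\mu(W) > 1-\epsilon$. For each fixed $W\in\mathcal{S}$ we have $\lim_{n\to\infty}\P(G_n\isom W)=\mu(W)$, and since $\mathcal{S}$ is finite, for all $n$ sufficiently large we get $\sum_{W\in\mathcal{S}}\P(G_n\isom W) > 1-2\epsilon$. But the events $\{G_n\isom W\}$ for distinct $W\in\mathcal{S}$ are disjoint, and each is disjoint from the event $\{G_n\text{ is infinite}\}$ (an infinite group is not isomorphic to any finite group). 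Hence for such $n$,
$$
\P(G_n\text{ is infinite}) \le 1 - \sum_{W\in\mathcal{S}}\P(G_n\isom W) < 2\epsilon.
$$
Therefore $\limsup_{n\to\infty}\P(G_n\text{ is infinite})\le 2\epsilon$, and since $\epsilon>0$ was arbitrary, $\lim_{n\to\infty}\P(G_n\text{ is infinite})=0$.

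There is essentially no obstacle here: the only thing one must be slightly careful about is that $G_n$ ranges over \emph{all} abelian groups (finitely generated, or at least groups that are either finite or infinite), not just finite ones, so that "infinite" is the genuine complement of "$\isom W$ for some finite $W$"; this is exactly the setting in which the lemma will be applied (to $\cok C_n$ or $\cok D_n$, tensored with $\prod_{p\in P}\Z_p$, which are finitely generated $\Z_p$-modules). The hypothesis that $\mu$ is a probability measure — i.e. that no mass is lost in the limit — is what makes the set $\mathcal{S}$ exist, and that is the crux of the statement: without it the conclusion is false. The finiteness of $\mathcal{S}$ is what lets us pass the limit through the (otherwise infinite) sum.
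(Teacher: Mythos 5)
Your proof is correct. The paper names the lemma \emph{Fatou} for a reason: it invokes Fatou's lemma directly, writing
\[
\liminf_{n\to\infty}\sum_{W}\P(G_n\isom W)\ \geq\ \sum_{W}\lim_{n\to\infty}\P(G_n\isom W)=\sum_W\mu(W)=1,
\]
where the sums run over isomorphism classes of finite abelian groups, and then noting $\P(G_n\text{ is finite})=\sum_W\P(G_n\isom W)$. You instead unwind this into an elementary $\epsilon$--argument: truncate to a finite family $\mathcal{S}$ capturing mass $>1-\epsilon$ under $\mu$, pass the limit through the resulting \emph{finite} sum, and use disjointness of the events $\{G_n\isom W\}$. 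The two arguments are essentially the same proof at different levels of packaging — your truncation is precisely how one proves the relevant case of Fatou's lemma for series of nonnegative terms — so neither buys anything the other lacks; yours is self-contained, the paper's is shorter by citing the standard lemma. Your closing remark that the hypothesis that $\mu$ has total mass $1$ is exactly what prevents probability from escaping to infinity is the right thing to emphasize, and matches the paper's intent.
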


\begin{proof}
We have
\begin{align*}
\lim_{\substack{n\ra\infty}}  \P(G_n \textrm{ is finite})
&=\lim_{\substack{n\ra\infty}}  \sum_{W \textrm{ fin. ab. group}} \P(G_n\isom W)\\
&\geq \sum_{W \textrm{ fin. ab. group}}\lim_{\substack{n\ra\infty}}  \P(G_n\isom W)\\
&= \sum_{W \textrm{ fin. ab. group}} \mu(W)\\
&=1,
\end{align*}
where the inequality is given by Fatou's Lemma.
\end{proof}

However, it does not suffice to apply Lemma~\ref{L:Fatou} to the cokernels of our sequences of matrices, because for all the sequences $A_n$ of matrices considered in this paper we have $\lim_{n\ra\infty} \P(\cok A_n\isom W)=0$ for each finite abelian group $W$ (since $\prod_{p \textrm{ prime}} (1-p^{-1}))=0$, see the argument for \cite[Corollary 9.3]{Wood2017}).  However, if we tensor our cokernels with $\Z_p$ we can use the following.

\begin{corollary}\label{C:getsing}
If $p$ is a prime, $A_n$ is a sequence of random integral matrices, and $\mu$ a probability measure on finite abelian $p$-groups such that $\lim_{n\ra\infty} \P((\cok A_n)_p\tensor W)=\mu(W)$ for every finite abelian $p$-group, then
$\lim_{n\ra\infty} \P(\det A_n=0)=0$.
\end{corollary}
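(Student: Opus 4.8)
The plan is to deduce the statement directly from Lemma~\ref{L:Fatou}, applied to the sequence of random abelian groups $G_n := (\cok A_n)_p = \cok A_n \tensor \Z_p$. The one point that does real work is the observation that for a square integer matrix $A_n$ one has $\det A_n = 0$ if and only if $(\cok A_n)_p$ is infinite. Indeed, by Smith normal form $\cok A_n \isom \bigoplus_{i=1}^n \Z/d_i\Z$ with $d_i \in \Z_{\geq 0}$ and $\det A_n = \pm\prod_i d_i$, so tensoring with $\Z_p$ gives $(\cok A_n)_p \isom \bigoplus_i (\Z/d_i\Z)_p$, where the $i$th summand is $\Z_p$ when $d_i = 0$ and the finite group $\Z/p^{v_p(d_i)}\Z$ otherwise. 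Hence $(\cok A_n)_p$ is infinite precisely when some $d_i = 0$, i.e. precisely when $\det A_n = 0$.

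First I would check that the hypothesis supplies exactly the input Lemma~\ref{L:Fatou} requires for $G_n$. For every finite abelian group $W$, the limit $\lim_{n\ra\infty}\P(G_n \isom W)$ exists: since $G_n$ is a $\Z_p$-module (its finite quotients are $p$-groups), $\P(G_n \isom W) = 0$ for all $n$ when $W$ is not a finite abelian $p$-group, and $\lim_{n\ra\infty}\P(G_n \isom W) = \mu(W)$ when $W$ is a finite abelian $p$-group. Defining $\tilde\mu(W)$ to be this limit for each finite abelian group $W$, we get a probability measure on the set of finite abelian groups: it agrees with $\mu$ on finite abelian $p$-groups, assigns mass $0$ to all other finite abelian groups, and therefore has total mass $1$ because $\mu$ does.

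Then Lemma~\ref{L:Fatou} applied to $G_n$ and $\tilde\mu$ gives $\lim_{n\ra\infty}\P(G_n \text{ is infinite}) = 0$, and by the equivalence recorded above this is exactly $\lim_{n\ra\infty}\P(\det A_n = 0) = 0$, as claimed. I do not expect any genuine obstacle: the only things to be careful about are that $\cok A_n$ is finite exactly when $A_n$ is nonsingular (which uses that the matrices are square) and that extending a probability measure on finite abelian $p$-groups by zero to all finite abelian groups keeps it a probability measure; both are immediate, so the argument is essentially a one-line application of the lemma.
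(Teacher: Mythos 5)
Your proof is correct and follows essentially the same route as the paper: you identify that $\det A_n = 0$ precisely when $(\cok A_n)_p$ is infinite (the paper states this via $\cok M \isom \Z^k \oplus T$, you via Smith normal form — same fact), and then invoke Lemma~\ref{L:Fatou} applied to $G_n = (\cok A_n)_p$. You are somewhat more careful than the paper in verifying the hypotheses of the lemma, in particular noting that extending $\mu$ by zero to all finite abelian groups still gives a probability measure; this is a detail the paper leaves implicit but your check is the right one.
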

\begin{proof}
If $M$ is an $n\times n$ integral matrix with $\det M=0$, then $\cok M\isom \Z^k \oplus T$ for some $k\geq 1$ and finite abelian group $T$, and in particular
$(\cok C_n)_P=(\cok C_n)\tensor \Z_p$ is infinite, and so the theorem follows.
\end{proof}

Next we will see how the above results can be used to give proofs of Theorem \ref{T:main} and the directed analog.  We only further need the facts that the limiting distribution is a probability distribution, and arguments of contiguity and conditioning to move between different models of random matrices.  We remark that the proofs of Huang \cite{Huang2018} and M\'esz\'aros \cite{Meszaros2018} only use the $\Z/p\Z$-moments of the cokernels distributions for infinitely many primes $p$ to establish asymptotic nonsingularity.  We have used all of the moments of the cokernel distributions to determine the entire distribution of the cokernels above, but for the nonsingularity argument only need the part that follows from the $W$-moments (i.e. $\E(|\Sur(-,W)|$) for each finite abelian $p$-group $W$ for a single prime $p$.

\begin{proof}[Proof of Theorem~\ref{T:main}]
We first use work of Bollob\'as \cite{Bollobas1980}, to replace $G(n,r)$ with a random multi-graph $G^*(n,r)$ given as follows (see \cite[Corollary 2.18]{Bollobas2001}).  We take $nr$ half-edges
labelled $(i,j)$ for $1\leq i\leq n$ and $1\leq j\leq r$, and then choose a uniform random perfect matching on these $nr$-half edges.  Then we create the random $r$-regular multi-graph $G^*(n,r)$ on vertex set $\{1,\dots,n\}$ by making an edge $ab$ for each pair $(a,j),(b,k)$ in the matching.  If we condition on $G^*(n,r)$ having no loops or multiple edges, then we get exactly $G(n,r)$.  Let $G'(n,r)$ be given by $G^*(n,r)$ conditioned on having no loops.  Since, for fixed $r$, the probability that 
$G^*(n,r)$ has a loop or multiple edge is bounded away from $1$ (see \cite[Proof of Theorem 2.16]{Bollobas2001}), it  suffices to prove the theorem with $G(n,r)$ replaced by $G^*(n,r)$ or $G'(n,r)$.

By work of Janson, $G'(n,r)$ is contiguous with the random multi-graph  $\hat{G}(n,r)$, which is given
by taking the union of $r$ independent uniform random perfect matchings on the vertex set $\{1,\dots,n\}$ (\cite[Theorem 11]{Janson1995}, see also \cite[Theorem 3]{Molloy1997}).  To say that two sequences of random graphs are \emph{contiguous} means that a property that holds asymptotically almost surely for one sequence holds asymptotically almost surely for the other sequence.  Thus is suffices to replace $G'(n,r)$ with $\hat{G}(n,r)$.  Note that the adjacency matrix of $\hat{G}(n,r)$ is the matrix $C_n$ in Theorem~\ref{T:sandpile}.  (M\'esz\'aros \cite[Proposition 5]{Meszaros2018} gives asymptotic nonsingularity of these matrices, and thus Theorem~\ref{T:main}
follows in this way, though we provide a different approach below.)

For a fixed odd prime $p$, not dividing $r$, let $\mu(V)$ denote the right-hand side of Equation~\eqref{E:probs} when $P=\{p\}$.  We will now show that $\mu$ gives a probability measure on finite abelian $p$-groups.
Let $\mathcal{A}$ denote the set of (isomorphism classes of) pairs $(V,\delta)$ where $V$ is a finite abelian $p$-group and $\delta$ is a
symmetric, bilinear, perfect pairing $\delta: V \times V \ra \C^*$.
By \cite[Proposition 7]{Clancy2015}, we have
$$
\sum_{(W,\delta)\in \mathcal{A}} \frac{1}{|W||\Aut(W,\delta)|}=\prod_{k\geq 0}(1-p^{-2k-1})^{-1}.
$$
We have that $\Aut(W)$ acts naturally on the set $P_W$ of symmetric, bilinear, perfect pairings on $V$, with stabilizer of $\delta$ being $\Aut(W,\delta)$,
and its orbits are in bijection with isomorphism classes $(W,\delta)\in \mathcal{A}$ such that $W\isom V$.  
By the orbit-stabilizer theorem, for each finite abelian $p$-group $V$, we have
$$
\sum_{\substack{(W,\delta)\in \mathcal{A}\\ W\isom V}} \frac{1}{|W||\Aut(W,\delta)|}=
\sum_{\delta\in P_W} \frac{1}{|W||\Aut(W)|},
$$
and so, summing over $W$, we have
$$
\prod_{k\geq 0}(1-p^{-2k-1})^{-1}=\sum_{W \textrm{ fin. ab. $p$-group}} \frac{\#\{\textrm{symmetric, bilinear, perfect } \phi:W\times W \ra \C^* \}}{|W||\Aut(W)|}.
$$
Thus, we conclude $\mu$ is a  probability measure on finite abelian $p$-groups, and the theorem follows from Corollary~\ref{C:getsing}.
\end{proof}

In the nonsymmetric matrix case we can conclude the following.
\begin{theorem}\label{T:main2}
Given an integer $r\geq 3$ and an integer $n$, let $D_{n}$ be the sum of $r$ independent uniform random $n\times n$ permutation matrices.
We have
$$
\lim_{\substack{n\ra\infty}} \P(\det D_n=0)=0.
$$
The theorem is also true if we replace $D_n$ by the adjacency matrix of a uniform random $r$-regular (in-degree $r$ and out-degree $r$) random directed graph on $n$ labeled vertices.
\end{theorem}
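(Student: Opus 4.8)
The plan is to run the argument for Theorem~\ref{T:main} in the non-symmetric setting, feeding in Theorem~\ref{T:sandpile2} in place of Theorem~\ref{T:sandpile}. Since $r\geq 3$ is a fixed integer, only finitely many primes divide $r$, so fix a prime $p$ with $p\nmid r$. Applying Theorem~\ref{T:sandpile2} with $P=\{p\}$ gives, for every finite abelian $p$-group $V$,
\[
\lim_{n\ra\infty}\P\bigl((\cok D_n)_p\isom V\bigr)=\frac{1}{|\Aut(V)|}\prod_{k\geq 1}(1-p^{-k})=:\mu(V).
\]
First I would verify that $\mu$ is a probability measure on the set of isomorphism classes of finite abelian $p$-groups; this is the classical Cohen--Lenstra identity $\sum_{V}|\Aut(V)|^{-1}=\prod_{k\geq 1}(1-p^{-k})^{-1}$ (see \cite{Cohen1984}, or \cite{Wood2015a}), applied by summing $\mu$ over all such $V$. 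Corollary~\ref{C:getsing} applied to the sequence $A_n=D_n$ then yields $\lim_{n\ra\infty}\P(\det D_n=0)=0$ at once.

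For the directed-graph statement I would reduce to the case just handled by a conditioning-and-contiguity argument parallel to the passage from $G(n,r)$ to $\hat{G}(n,r)$ in the proof of Theorem~\ref{T:main}. The matrix $D_n$ is the adjacency matrix of the random $r$-regular directed multigraph obtained by superimposing $r$ independent uniform permutations of $\{1,\dots,n\}$ (loops allowed). Conditioning on the event $E_n$ that this multigraph is simple (no loops, no repeated directed edge), and using a directed analog of the configuration-model and contiguity results invoked in the proof of Theorem~\ref{T:main} to relate the conditioned model to the uniform simple $r$-regular digraph, it suffices to control $\P(\det D_n=0\mid E_n)$. For fixed $r$, a standard Poisson-approximation (or second-moment) computation shows that $\P(E_n)$ is bounded away from $0$ as $n\ra\infty$, so
\[
\P(\det D_n=0\mid E_n)\leq \frac{\P(\det D_n=0)}{\P(E_n)}\ra 0,
\]
and contiguity transfers this conclusion to the uniform model.

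The step I expect to be the main obstacle is the directed-graph reduction, and within it the contiguity input: conditioning $D_n$ on simplicity does \emph{not} produce the uniform distribution on simple $r$-regular digraphs, because a given such digraph gets weighted by the number of ordered factorizations of its $0/1$ adjacency matrix into $r$ permutation matrices. One therefore needs a genuine contiguity statement --- the directed counterpart of the Bollob\'as configuration model together with a Janson-type theorem --- which is either available in the literature or must be checked by the usual small-subgraph-conditioning method. Everything else (the probability-measure check and the application of Corollary~\ref{C:getsing}) is routine given the results already established above.
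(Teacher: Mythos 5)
Your proposal follows essentially the same route as the paper: verify that the Cohen--Lenstra measure is a probability measure (the paper cites Hall \cite{Hall1938}), apply Corollary~\ref{C:getsing} to get nonsingularity for $D_n$, and then pass to the uniform simple $r$-regular digraph by conditioning on simplicity and invoking contiguity. The contiguity input you flag as the potential obstacle is indeed available in the literature and is exactly what the paper cites --- the paper conditions in two stages (first on no loops, then on no multiple edges, using \cite[Theorem 7]{Janson1995} for both to keep the conditioning events of positive probability) and then invokes \cite[Section 4]{Molloy1997} and \cite[Section 4]{Janson1995} for contiguity of the conditioned sum-of-permutations model with the uniform $r$-regular digraph.
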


A version of Theorem~\ref{T:main2} for the configuration model of random regular directed graphs was  proven by Huang \cite[Theorem 1.1]{Huang2018} (and other models including Theorem~\ref{T:main2} can follow from his work by the same types of contiguity and conditioning arguments that allow us to move between models). 

\begin{proof}[Proof of Theorem~\ref{T:main2}]
For a finite abelian $p$-group $V$, let $\mu'(V)$ denote the right-hand side of Equation~\eqref{E:probs2} when $P=\{p\}$. 
The measure $\mu'$ is  the well known Cohen-Lenstra probability measure on finite abelian $p$-groups (see e.g. \cite{Hall1938} for a proof that is is a probability measure).
So the first statement of the theorem follows from Corollary~\ref{C:getsing}.

Let $\Gamma(n,r)$ be a uniform random $r$-regular (in-degree $r$ and out-degree $r$) random directed graph on $n$ labeled vertices.
Let $\bar{\Gamma}(n,r)$ be the random multi-graph given by a union of $r$ independent uniform 1-regular (no loops or multiple edges) directed graphs on $n$ labeled vertices.
Let  $\tilde{\Gamma}(n,r)$ be $\bar{\Gamma}(n,r)$ conditioned on no multiple edges.
By \cite[Section 4]{Molloy1997}, we have that $\Gamma(n,r)$ is contiguous with $\tilde{\Gamma}(n,r)$ (see also \cite[Section 4]{Janson1995}), so it suffices to prove the theorem for adjacency matrices of $\tilde{\Gamma}(n,r)$.
  Since the probability that $\bar{\Gamma}(n,r)$ has multiple edges is bounded away from  $1$ (e.g. see \cite[Theorem 7]{Janson1995}),
  it suffices to prove the theorem for adjacency matrices of $\bar{\Gamma}(n,r)$.
  If we let $\Gamma^*(n,r)$ be the random graph whose adjacency matrix is the sum of $r$ independent uniform random $n\times n$ permutation matrices,
  then $\bar{\Gamma}(n,r)$ is $\Gamma^*(n,r)$ conditioned on no loops.  Since the probability that $\Gamma^*(n,r)$ has loops is bounded away from $1$ (e.g. see \cite[Theorem 7]{Janson1995}),
  it suffices to prove the theorem for adjacency matrices of $\Gamma^*(n,r)$, which is what we have already done.
\end{proof}

\section{New distributions}\label{S:newd}
The matrices $D_n$ and $C_n$ from Theorems~\ref{T:sandpile2} and \ref{T:sandpile} have each row and column summing to $r$. 
So when $p$ is a prime such that $p\mid r$, these matrices are always singular mod $p$, and $(\cok D_n)_p$ and $(\cok C_n)_p$ are never trivial, and in particular these cokernels  are not distributed in the familiar distributions given in Theorems~\ref{T:sandpile2} and \ref{T:sandpile}.  In this section we give some remarks on the distribution in the non-symmetric matrix case, and find the limiting moments of this distribution.

For integers $r,m$, we define an \emph{$(r,m)$-pair} to be a pair $(G,C)$, where $G$ is an abelian group $G$, and $C$ is a coset of $G$, such that the following conditions hold: if we write $C=\gamma+H$, where $\gamma\in G$ and $H$ is a subgroup of $G$, then (1) $r(G/H)=0$, (2) $\gamma$ generates $G/H$, and (3) $mr\gamma\in rH$.  Note that if $G$ is finite and $r$ is relatively prime to $|G|$, then condition (1) implies $H=G$ and thus $C=G$.  

We note that $\cok D_n=\Z^n/D_n(\Z^n)$ is not just an abelian group, but naturally has the structure of an $(r,n)$-pair with coset $e_1+E$, where $e_i$ are the images of the standard generators of $\Z^n$, and $E$ is the subgroup generated by the elements $e_i-e_j$ for $1\leq i <j \leq n$.  Condition (2) would be satisfied for any matrix.  If we let $d_{i,j}$ be the entries of $D_n$, then for each $j$, we have $\sum_i d_{i,j}=r$.  Since in $\cok D_n$, we have $\sum_i d_{i,j} e_i=0$, in $(\cok D_n)/E$, we have $re_1=0$,  and thus condition (1) is satisfied.  Since $\sum_{i,j} d_{i,j} e_i=0$, we have $0=nre_1+\sum_{i,j} d_{i,j} (e_i-e_1)=nre_1+\sum_{i} r(e_i-e_1)$, which gives condition (3).

For abelian groups $G,G'$ with cosets $C,C'$ respectively, we let $\Sur((G,C),(G',C'))$ be the set of surjective group homomorphisms from $G$ to $G'$ in which $C$ has image $C'$.  For a random $(r,m)$-pair $(G,C)$, we define its moments to be indexed by fixed $(r,m)$-pairs $(A,B)$, and for the $(A,B)$-moment to be $\E(|\Sur((G,C),(A,B))|).$
For a group $G$, we let $G[r]$ denote the subgroup of elements $g\in G$ such that $rg=0$.
 
\begin{theorem}\label{T:sandpile3}
Let $r$ and $m$ be positive integers. Let $D_n$ be as in Theorem~\ref{T:sandpile2}, and $e_1$ and $E$ as above.  Let $(V,B)$ be an $(r,m)$-pair such that $B$ is a coset of the subgroup $H$ of $V$.
Then 
$$
\lim_{\substack{n\ra\infty\\ m|n}} \E(|\Sur((\cok D_n,e_1+E),(V,B))|)= |H[r]|.
$$
\end{theorem}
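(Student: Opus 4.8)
The plan is to follow the same strategy as in the proofs of Theorems~\ref{T:sandpile2} and~\ref{T:sandpile}, but now keeping track of the full $(r,m)$-pair structure rather than just the underlying group. By the same argument using M\'esz\'aros's mixing result \eqref{E:M}, I would write
\[
\lim_{\substack{n\ra\infty\\ m|n}} \E(|\Sur((\cok D_n,e_1+E),(V,B))|)
= \lim_{\substack{n\ra\infty\\ m|n}} \sum_{\substack{q\in \Sur((\Z^n,e_1+E),(V,B))\\ 0\in R(q,r)}} |R(q,r)|^{-1},
\]
where now the sum is over $q\in V^n$ such that (i) $q_i$ generate $V$, (ii) the induced map on cosets sends $e_1+E$ to $B$, equivalently $\operatorname{MinCos}_q = B = \gamma+H$ with $q$ mapping the distinguished coset correctly, and (iii) $qD_n = 0$. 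The condition $0\in R(q,r)$ unwinds to $0\in r\cdot\operatorname{MinCos}_q$ together with $r\sum_i q_i = 0$.

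The first key step is the count of $|R(q,r)|$. For $q$ with $\operatorname{MinCos}_q = \gamma+H$, the set $R(q,r)=\{s\in (r\gamma+rH)^n \mid \sum_i s_i = r\sum_i q_i\}$; I expect $|R(q,r)| = |rH|^{n-1}$, by the same ``shift one coordinate by a generator'' surjectivity argument as in the proof of Theorem~\ref{T:sandpile} (the affine subspace $(r\gamma+rH)^n$ has $|rH|^n$ points, and the single linear constraint $\sum s_i = r\sum q_i$ is satisfiable — because $0\in r\cdot\operatorname{MinCos}_q$ forces the target to lie in $rH$ — and cuts down by exactly $|rH|$). The second key step is counting the $q$'s. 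Here, for fixed coset data $\gamma+H$ compatible with $B$, the relevant $q\in H^n$ (after translating by a fixed representative of $B$) satisfying $\sum q_i = $ (the appropriate element) number $|H|^{n-1}$ up to the surjectivity/minimal-coset correction, and as in the earlier proofs the non-surjective or wrong-minimal-coset terms contribute $o(|H|^n)$. Combining, the limiting moment should be $|H|^{n-1}/|rH|^{n-1} = (|H|/|rH|)^{n-1}\cdot(\text{something})$ — and here is the point where I need to be careful: $|H|/|rH| = |H[r]|$ by the first isomorphism theorem applied to multiplication-by-$r$ on $H$, but I must make sure the exponents match so that the $n$-dependence cancels and only $|H[r]|$ survives.

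The main obstacle I anticipate is precisely this bookkeeping of how the coset condition interacts with the two constraints. Specifically: (a) verifying that, given an $(r,m)$-pair $(V,B)$ with $B=\gamma+H$, the set of $q\in\Sur((\Z^n,e_1+E),(V,B))$ with $0\in R(q,r)$ is exactly (up to $o$-error) the set of $q\in V^n$ with $q_1\in\gamma+H$ (say), all $q_i$ in $\gamma+H$ or in $H$ appropriately so that $\operatorname{MinCos}_q=\gamma+H$, $\sum_i q_i$ equal to the forced value, and the generation condition; and (b) checking that conditions (1)–(3) in the definition of $(r,m)$-pair are exactly what make both $0\in r\cdot\operatorname{MinCos}_q$ and $r\sum q_i=0$ consistent and simultaneously solvable for $n$ with $m\mid n$ — in particular condition (3), $mr\gamma\in rH$, is what handles the $\sum_i q_i$ constraint when $n$ is a multiple of $m$ (since $\sum_i q_i \equiv n\gamma \pmod H$ and one needs $r\sum_i q_i=0$ achievable). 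Once these compatibility facts are pinned down, the arithmetic collapses to $|H[r]|$ via $|H|^{n-1}/|rH|^{n-1}$ together with the observation that the number of admissible coset-shifts is absorbed, and the theorem follows; I would close by invoking, as before, that we have computed the $(A,B)$-moment for every $(r,m)$-pair $(A,B)=(V,B)$.
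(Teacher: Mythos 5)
Your overall strategy is correct, but two concrete arithmetic errors leave a genuine gap, and they are precisely what produces the mismatched exponents you flag at the end. First, you have misread the definition of $r\cdot\operatorname{MinCos}_q$: per the setup in the proof of Theorem~\ref{T:sandpile}, if $\operatorname{MinCos}_q=\gamma+H$ then $r\cdot\operatorname{MinCos}_q$ is defined to be $r\gamma+H$, with the subgroup $H$ \emph{unchanged}, not $r\gamma+rH$. Under condition (1) of an $(r,m)$-pair one has $r\gamma\in H$, so $r\cdot B=H$, hence $R(q,r)=\{s\in H^n:\sum_i s_i=r\sum_i q_i\}$. Since each $rq_i\in H$ the constraint is satisfiable, and $|R(q,r)|=|H|^{n-1}$, not $|rH|^{n-1}$.

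Second, your count of admissible $q$'s is short by exactly the factor $|H[r]|$. The condition $0\in R(q,r)$ forces $r\sum_i q_i=0$, which is a mod-$r$ constraint, strictly weaker than prescribing $\sum_i q_i$. Fixing $q_1,\dots,q_{n-1}\in B$ arbitrarily, the equation $rq_n=-r\sum_{i<n}q_i$ has solution set a coset of $H[r]$ inside $B$; it is nonempty precisely because condition (3) together with $m\mid n$ gives $nr\gamma\in rH$. So there are $|H[r]|$ choices of $q_n$, not one, and the correct count is $|H|^{n-1}|H[r]|+o(|H|^n)$ (the $o$-term absorbing the non-surjective and wrong-$\operatorname{MinCos}$ contributions, as in your sketch). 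Dividing by $|R(q,r)|=|H|^{n-1}$ yields $|H[r]|$ cleanly, with no residual $n$-dependence. The diverging ratio $|H|^{n-1}/|rH|^{n-1}=|H[r]|^{n-1}$ in your write-up is a symptom of these two errors compounding; once both are repaired the bookkeeping closes and the theorem follows as you outlined.
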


\begin{proof}
Let $\tilde{e}_i$ be the standard generators of $\Z^n$ and $\tilde{E}$ the subgroup generated by $\tilde{E}$.  
Note that a surjection from $(\cok D_n,e_1+E)$ to $(V,B)$ is exactly given by surjection $q:(\Z^n,\tilde{e}_1+\tilde{E})\ra (V,B)$  (we write $q\in V^n$ as in the proof of Theorem~\ref{T:sandpile2}) such that $q D_n=0$.
A surjection $q:(\Z^n,\tilde{e}_1+\tilde{E})\ra (V,B)$ is exactly a surjection $q: \Z^n \ra V$ such that $\operatorname{MinCos}_q=B$.  (Recall the notation from the proof of Theorem~\ref{T:sandpile}, and note that the minimal coset containing elements $q_i$ is exactly $q_1+H$, where $H$ is the subgroup generated by all the $q_i-q_j$.) 

We write $B=\gamma+H$, for some $\gamma\in V$.
For $q\in \Sur(\Z^n, V)$ with $\operatorname{MinCos}_q=B$, we next determine $|R(q,r)|$. 
(Recall the notation from the proof of Theorem~\ref{T:sandpile2}.)
 By condition (1), we have $r\cdot B=H$.  We have that $R(q,r)$ is the set of $s\in H^n$ such that $\sum_{i=1}^n s_i = r\sum_{i=1}^n q_i$.  By condition (1), we have $rq_i\in H$, and so   $|R(q,r)|=|H|^{n-1}$.

Next we determine the number of $q\in \Sur(\Z^n, V)$ with $\operatorname{MinCos}_q=B$ such that $0\in R(q,r)$.  Note that $0\in R(q,r)$ is equivalent to $0\in r\cdot \operatorname{MinCos}_q$ and $r\sum_{i=1}^n q_i=0$.  For $q$ with $\operatorname{MinCos}_q=B$, we have that $0\in r\cdot \operatorname{MinCos}_q$ is implied by condition (1) on $B$.  If we let $q_1,\dots, q_{n-1}$ be any choices of elements in $B$, then we will count the number of  $q_n\in B$ such that 
$r\sum_{i=1}^n q_i=0$.  This is the same as the number of $h$ such that $r(\sum_{i=1}^{n-1} q_i +\gamma + h)=0$, which is the same as the number of $h'$ such that
$r(n\gamma +h')=0$.  By condition (3) and $m\mid n$, we have $rn\gamma\in rH$, and thus there are $H[r]$ choices of $q_n$ satisfying $r\sum_{i=1}^n q_i=0$.
We conclude there are $|H|^{n-1}|H[r]|$ choices of $q\in B^n$ such that $r\sum_{i=1}^n q_i=0$.
  By condition (2), we have $q\in V^n$ is a surjection if and only if the $q_i-q_j$ generate $H$.  Also, for $q\in B^n$, we have $\operatorname{MinCos}_q=B$ if and only if the $q_i-q_j$ generate $H$.
If we choose $q_1$ and then $q_i-q_i$ for each $i$, we see that the number of $q\in B^n$ such that the $q_i-q_j$ do not generate $H$ is a most
$$
|H| \sum_{\substack{H' \textrm{proper}\\ \textrm{subgroup of $H$}}} |S|^{n-1}=o(|H|^n),
$$
where the constant in the little $o$ notation depends on $H$.  So we conclude that the number of $q\in \Sur(\Z^n, V)$ with $\operatorname{MinCos}_q=B$ such that $0\in R(q,r)$ is $|H|^{n-1}|H[r]|+o(|H|^n)$.

Thus, using Equation~\eqref{E:M}, we conclude
\begin{align*}
\E(|\Sur((\cok D_n,e_1+E),(V,B))|)&=  \sum_{\substack{q\in \Sur(\Z^n,V)\\\operatorname{MinCos}_q=B
\\ 0\in R(q,r)
}}
|\P(q D_n=0)|\\
&=  \sum_{\substack{q\in \Sur(\Z^n,V)\\\operatorname{MinCos}_q=B
\\ 0\in R(q,r)
}}
|R(q,r)|^{-1} \\
&= |H[r]|+o(1).
\end{align*}
\end{proof}

It is natural to combine these moments into the usual moments of finite abelian groups.
For a finite set of primes $P$ and an integer $n$, we define $n_P$ to be the largest divisor of $n$ that is a product of powers of primes in $P$.

\begin{corollary}\label{C:mom}
Let $r$ and $m$ be positive integers. Let $D_n$ be as in Theorem~\ref{T:sandpile2}.  Let $P$ be a finite set of primes and $V$ be a finite abelian group whose order is a product of powers of primes in $P$.  
Then 
$$
\lim_{\substack{n\ra\infty\\ n_P=m}} \E(|\Sur(\cok D_n,V)|) =\sum_{\substack{\gamma+H \textrm{coset of $V$}\\
(V,\gamma+H) \textrm{ an $(r,m)$-pair}
}} |H[r]|.
$$
\end{corollary}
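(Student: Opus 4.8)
The plan is to decompose $\Sur(\cok D_n,V)$ according to the coset of $V$ induced by each surjection, and then feed each piece into Theorem~\ref{T:sandpile3}. Recall from the discussion preceding that theorem that $\cok D_n$ naturally carries the structure of an $(r,n)$-pair with distinguished coset $e_1+E$. Writing an element of $\Sur(\cok D_n,V)$ as a vector $q\in V^n$ with $qD_n=0$ (as in the proof of Theorem~\ref{T:sandpile3}), the image of the distinguished coset $e_1+E$ is $q_1+q(E)$; since $q(E)$ is the subgroup of $V$ generated by the differences $q_i-q_j$, this image is precisely $\operatorname{MinCos}_q$. Hence
$$
|\Sur(\cok D_n,V)|=\sum_{B\textrm{ a coset of }V}|\Sur((\cok D_n,e_1+E),(V,B))|,
$$
a sum over the finitely many cosets of $V$.

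The first real step is to show that the only cosets $B$ giving a nonzero summand are those for which $(V,B)$ is an $(r,n_P)$-pair; this is the same computation that yields the $(r,n)$-pair structure of $\cok D_n$, now run in the quotient $V$. If $q\in\Sur(\Z^n,V)$ satisfies $qD_n=0$ and $\operatorname{MinCos}_q=\gamma+H$, then reducing the relations $\sum_i d_{i,j}q_i=0$ modulo $H$ and using that each $q_i\equiv\gamma\pmod H$ and that the columns of $D_n$ sum to $r$ gives $r\gamma\in H$; since the $q_i$ generate $V$, $\gamma$ generates $V/H$, so conditions (1) and (2) hold; summing all the relations and using that the rows of $D_n$ sum to $r$ gives $nr\gamma\in rH$. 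Now write $n=n_P\,n'$ with $n'$ coprime to every prime in $P$, hence coprime to $|V|$: multiplication by $n'$ is an automorphism of $V$ carrying the subgroup $rH$ onto itself, so $n'(n_P r\gamma)=nr\gamma\in rH$ forces $n_P r\gamma\in rH$, which is condition (3) for $m=n_P$. Thus $(V,\operatorname{MinCos}_q)$ is always an $(r,n_P)$-pair, and the summand attached to $B$ vanishes identically whenever $(V,B)$ is not an $(r,n_P)$-pair.

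Finally I would specialize to $n$ with $n_P=m$. Since $n_P$ always divides $n$, the condition $n_P=m$ implies $m\mid n$, so Theorem~\ref{T:sandpile3} applies term by term; restricting its conclusion to the infinite subcollection $\{n : n_P=m\}$ gives $|\Sur((\cok D_n,e_1+E),(V,\gamma+H))|\to|H[r]|$ for each $(r,m)$-pair $(V,\gamma+H)$, while all other summands are $0$ once $n_P=m$ by the previous paragraph. Interchanging the finite sum with the limit yields
$$
\lim_{\substack{n\ra\infty\\ n_P=m}}\E(|\Sur(\cok D_n,V)|)=\sum_{\substack{\gamma+H\textrm{ a coset of }V\\ (V,\gamma+H)\textrm{ an }(r,m)\textrm{-pair}}}|H[r]|,
$$
as claimed. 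I expect the one point needing care to be the descent in the middle paragraph from the automatic $(r,n)$-pair structure of $\cok D_n$ to the $(r,n_P)$-pair constraint that controls which cosets occur; granted that, the corollary is just a repackaging of Theorem~\ref{T:sandpile3} over a finite index set.
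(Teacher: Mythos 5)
Your proof is correct and essentially the same as the paper's: both decompose each surjection $\phi:\cok D_n\to V$ by the coset $\phi(e_1+E)$, push forward the $(r,n)$-pair conditions from $\cok D_n$ to see that only $(r,n_P)$-pairs occur, and then apply Theorem~\ref{T:sandpile3} term by term over the finite set of cosets. Your version merely spells out the coset decomposition and the unit-multiplication step (passing from $nr\gamma\in rH$ to $n_Pr\gamma\in rH$) that the paper leaves implicit.
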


\begin{proof}
It suffices to show that every surjection $\phi:\cok D_n \ra V$ has $(V,\phi(e_1+E))$ an $(r,m)$-pair, so each surjection is a surjection to some $(r,m)$-pair.  Since $r((\cok D_n)/E)=0$, we have
$r(V/\phi(E))=0$.  Since $e_1$ generates $(\cok D_n)/E$, we have that $\phi(e_1)$ generates $V/E$.  Since $nre_1 \in rE$, we have
$nr\phi(e_1) \in r\phi(E)$, and since $n_P=m$ that implies $mr\phi(e_1) \in r\phi(E).$
\end{proof}

We notice two interesting features of the moments in Theorem~\ref{T:sandpile3}.
If $r$ is relatively prime to the $p\in P$, then we noted above that $(V,V)$ is the only $(r,m)$-pair, and the moments are all $1$.
The first interesting feature, as expected above, is that the moments will be bigger when $r$ and $V$ are not relatively prime.  For example, if $V=\Z/p\Z$, and $p\mid r$,
then each non-zero element $\gamma\in V$ gives an $(r,m)$-pair with trivial $H$, and we also have the $(r,m)$-pair $(V,V)$ with $|V[r]|=p$, and so we have total limiting $\Z/p\Z$-moment $2p-1$.

We also note that for $r$ and $V$ fixed, the moments also can depend on $n_P$, i.e. there is not necessarily a limit as $n\ra\infty$ if we include all $n$.  This is why we sort $n$ by the values of $n_P$ in Corollary~\ref{C:mom} above.  For example, let $r=p$ and $V=\Z/p^2\Z$.  The $(r,p)$-pairs have cosets $B=V$ and $B=\gamma+p\Z/p^2\Z$ for $\gamma\not\in p\Z/p^2\Z$ (there are $p-1$ choices of the latter cosets).  However, the only $(r,1)$-pair is with coset $B=V$, because the cosets $B=\gamma+p\Z/p^2\Z$ do not satisfy condition (3), as $p\gamma$ is not trivial but $p (p\Z/p^2\Z)$ is trivial. So for $n$ with $n_p=1$, the $\Z/p^2\Z$-moment approaches $p$, but for
$n$ such that $p\mid n$, the $\Z/p^2\Z$-moment approaches $p^2$. 

In conclusion, for each $r$ and $m$ we see new distributions on finite abelian groups, and it would be interesting to better understand these distributions.  

\subsection*{Acknowledgements} 
The authors thank Nick Cook for helpful comments on an earlier version of this manuscript.
During this work, the first author was partially supported by National
Science Foundation grants DMS-1600782 and DMS-1752345.
During this work, the second author was supported by a Packard Fellowship for Science and Engineering, a Sloan Research Fellowship,  National Science Foundation grants DMS-1652116 and DMS-1301690, and a Vilas Early Career Investigator Award.

\newcommand{\etalchar}[1]{$^{#1}$}

\end{document}